\newtheorem{thm}{Theorem}
\newtheorem{cl}{Claim}
\newtheorem{Lemma}{Lemma}
 \title{Augmenting a hypergraph to have a matroid-based $(f,g)$-bounded $(\alpha,\beta)$-limited packing of rooted hypertrees}
\author{Pierre Hoppenot, Zolt\'an Szigeti\\ University Grenoble Alpes, INP, G-SCOP}
\begin{document}

\maketitle

\begin{abstract}
The aim of this paper is to further develop the theory of packing trees in a graph. We first prove the classic result of Nash-Williams \cite{NW} and Tutte \cite{Tu} on packing spanning trees by adapting Lov\'asz' proof \cite{Lov} of the seminal result of Edmonds \cite{Egy} on packing spanning arborescences in a digraph. Our main result on graphs extends the theorem of Katoh and Tanigawa \cite{KT} on matroid-based packing of rooted trees by characterizing the existence of such a packing satisfying the following further conditions: for every vertex $v$, there are a lower bound $f(v)$ and an upper bound $g(v)$ on the number of  trees rooted at  $v$ and there are a lower bound $\alpha$ and an upper bound $\beta$ on the total number of roots. We also answer the hypergraphic version of the problem. Furthermore, we are able to solve the augmentation version of the latter problem, where the goal is to add a minimum number of edges to have such a packing. The methods developed in this paper to solve these problems may have other applications in the future.
\end{abstract}

\section{Introduction}

The first major result on packing spanning trees is due to Nash-Williams~\cite{NW} and Tutte~\cite{Tu}. They independently characterized graphs having a packing of $k$ spanning trees; in other words $k$ pairwise edge-disjoint spanning trees. As a first contribution of this paper we provide a new proof of their result. We believe that the proof is new but we know that the approach is old. Actually, Lov\'asz \cite{Lov} provided an elegant and simple proof of Edmonds' result on packing spanning arborescences and here we work out how the same idea can be applied in the undirected case.

Since then, the result of Nash-Williams~\cite{NW} and Tutte~\cite{Tu} has been extended in several ways. Notably, Katoh and Tanigawa \cite{KT} characterized graphs admitting a complete matroid-based packing of rooted trees, see Theorem \ref{thmKT}. Here the rooted trees are not necessarily spanning. However, a matroid is given on the root set and the packing must satisfy a matroid constraint, informally meaning that  every vertex is reachable from a basis of the matroid in the rooted trees of the packing. Katoh and Tanigawa  explain in \cite{KT}  an interesting application of this theorem in rigidity theory. 

Our goal is to extend the result of Katoh and Tanigawa \cite{KT} on matroid-based packing of rooted trees. To do so, we develop useful tools mainly based on our improved knowledge of the uncrossing of two partitions of a set. First, we  show the submodularity of some functions with two variables. We also give a tool which shows that it is possible to simultaneously cover, with an edge set, two supermodular functions on partitions of a vertex set, see Theorem \ref{coveringpartsuper12}. This has been proved in a special case in \cite{HS1}. Likewise, we develop a tool which shows that it is possible to trim a hypergraph that covers two supermodular functions on partitions to a graph that covers the same functions,  see Theorem \ref{trimminglemma}. 

The discovery of the submodularity of the above mentioned functions on partitions allows us to give the rank function of a new matroid which was inspired by the work of Katoh and Tanigawa \cite{KT}. More precisely, for a graph $G = (V, E)$,  a multiset $S$ of vertices in $V$,  and a matroid ${\sf M} = (S, r_{\sf M}),$ we give a matroid whose independent sets of size $r_{\sf M}(S)|V|$ are exactly the sets $F\cup R,$ where $F$ is the edge set and $R$ is the root set of an ${\sf M}$-based packing of rooted trees in $G$, see Theorem \ref{thmKTimplicitgen}. 

This matroid along with another matroid, the bounded direct sum of matroids (see Theorem \ref{genpart}), play a crucial role in the solution of the following problem. Given  a graph $G=(V,E)$, a multiset $S$ of vertices in $V$, $k\in\mathbb Z_+,$ $f,g: V\rightarrow \mathbb Z_+$ functions, and ${\sf M}=(S,r_{\sf M})$ a matroid, characterize the existence of an {\sf M}-based $(f,g)$-bounded packing of $k$ rooted trees  in $G$, where $(f,g)$-bounded means that every vertex $v\in V$ is the root of at least $f(v)$ and at most $g(v)$ rooted trees, see Theorem \ref{hvkgcvkl}. We extend this result to $(\alpha, \beta)$-limited packings, meaning that the given value of the number of roots is relaxed to an interval $[\alpha, \beta]$, see Theorem \ref{hvkgcvkl2}. 
Using the newly found submodularity of some functions on partitions and the previously mentioned result on trimming, we are then able to generalize the previous result to get a characterization of  hypergraphs having an ${\sf M}$-based $(f, g)$-bounded $(\alpha, \beta)$-limited packing of rooted hypertrees, see Theorem \ref{hvkgcvkl2hyp}. This extends an earlier theorem of Frank, Kir\'aly and Kriesell~\cite{fkk} that generalized Nash-Williams and Tutte's theorem to hypergraphs. For further new results on packing hypertrees, see \cite{HMSz}. 

We are also able to formulate and prove the conditions under which a hypergraph can be augmented (in term of minimum number of edges) to contain  an ${\sf M}$-based $(f, g)$-bounded $(\alpha, \beta)$-limited packing of rooted hypertrees, see Theorem \ref{kjvjhcychjkjhyp}. The readers interested  in similar augmentation problems are invited to see \cite{HS1} and \cite{szighypbranc}.

The organization of this paper is as follows. 
In Section \ref{definitions} we provide the necessary definitions.
In Section \ref{partitions} we first introduce  some  submodular and some supermodular functions on partitions. Then we prove our tools on covering and trimming about supermodular functions on partitions.
Section \ref{packres} contains the above mentioned results on packing trees and their proofs.

\section{Definitions}\label{definitions}

We denote by $\pmb{\mathbb{Z}}$ the set of integers and $\pmb{\mathbb{Z}_+}$ the set of non-negative integers. Let $\bm{V}$ be a finite set. For a function $m : V \rightarrow \mathbb{Z}$ and a subset $X$ of $V,$ we define $\bm{m(X)} = \sum_{x \in X} m(x)$.
For $X \subseteq V$, we denote by $\bm{\overline{X}}$ its \textit{complement}, that is $V\setminus X$. We say that $X$ \textit{separates} two distinct elements  of $V$ if $X$ contains one of them and $\overline{X}$ contains the other one. A \textit{multiset} of $V$ is a set of elements of $V$ allowing multiplicities. For a multiset $S$ of $V$ and $X \subseteq V$, $\bm{S_X}$ denotes the multiset of $V$ consisting of the restriction of $S$ on $X$. A set $\mathcal{S}$ of subsets of $V$ is called a \textit{family} if the subsets of $V$ are taken with multiplicities in $\mathcal{S}$. For a family $\mathcal{S}$ of subsets of $V$ and a subset $X$ of $V$, we denote by $\bm{\mathcal{S}_X}$ the family containing the sets in $\mathcal{S}$ that intersect $X$. Two subsets $X$ and $Y$ of $V$ are called \textit{properly intersecting} if none of $X \cap Y$, $X\setminus Y$, and $Y\setminus  X$ is empty. The operation that replaces two properly intersecting sets by their intersection and their union is called \textit{uncrossing}. For a family $\mathcal{F}$ of subsets of $V$, the \textit{uncrossing method} consists in applying repetitively  the uncrossing operation as long as properly intersecting sets exist in the family.

A set of pairwise disjoint subsets of $V$ such that their union is $V$ is called a \textit{partition} of $V$. 
We say that a subset $X$ of $V$ \textit{crosses} a partition $\mathcal{P}$ of $V$ if $X$ intersects at least two members of $\mathcal{P}$.
Let $\mathcal{P}_1$ and $\mathcal{P}_2$ be two partitions of $V$ and $\mathcal{P} = \mathcal{P}_1 \cup \mathcal{P}_2$. We use the uncrossing method on the family $\mathcal{P}$ to obtain a new family $\mathcal{P}'$ which contains no properly intersecting sets. Taking respectively the minimal and maximal sets in $\mathcal{P}'$, we obtain two partitions $\mathcal{P}_1'$ and $\mathcal{P}_2'$ of $V.$ We call $\mathcal{P}_1'$  the \textit{intersection}  of $\mathcal{P}_1$ and $\mathcal{P}_2$, and we denote it by $\bm{\mathcal{P}_1 \sqcap \mathcal{P}_2}$;  we call $\mathcal{P}_2'$  the \textit{union}  of $\mathcal{P}_1$ and $\mathcal{P}_2$ and we denote it by $\bm{\mathcal{P}_1 \sqcup \mathcal{P}_2}$. We mention that while $\mathcal{P}_1 \sqcap \mathcal{P}_2$ depends on the choices during execution of the uncrossing method, $\mathcal{P}_1 \sqcup \mathcal{P}_2$ is uniquely defined.

Let $S$ be a finite ground set. A set function $b$ on $S$ is called \textit{non-decreasing} if $b(X) \le b(Y)$ for all $X\subseteq Y \subseteq V$ and \textit{subcardinal} if $b(X) \le |X|$ for every $X \subseteq V$. We say that $b$ is \textit{submodular} (resp. \textit{intersecting submodular}) if $b(X) + b(Y) \ge b(X \cap Y) + b(X \cup Y)$ for every sets (resp. properly intersecting sets) $X, Y \subseteq V$. A set function $p$ on $S$ is called \textit{supermodular} if $-p$ is submodular. A set function $m$ on $S$ is called \textit{modular} if it is submodular and supermodular. Let $r$ be a non-negative, integer-valued, non-decreasing, subcardinal and submodular set function on $S$. Then $\bm{{\sf M}} = (S, r)$ is called a \textit{matroid} and $r$ is called the \textit{rank function} of ${\sf M}$. A subset $X$ of $S$ is called an \textit{independent set} of ${\sf M}$ if $r(X) = |X|$. The set of independent sets of ${\sf M}$ is denoted by $\bm{\mathcal{I}_{{\sf M}}}$. A maximal independent set of ${\sf M}$ is called a \textit{basis} of {\sf M}. If $S$ is a basis of ${\sf M}$, then  ${\sf M}$ is called  \textit{free matroid}. If the bases are all the subsets of $S$ of size $k$, then {\sf M} is called a {\it uniform matroid} of rank $k.$ For $T\subseteq S,$   the matroid $\bm{{\sf M}|_T}=(T,r)$, obtained from ${\sf M}$ by deleting the elements $S\setminus T$, is called \textit{restricted} matroid on $T.$ For an independent set $X$ in ${\sf M}$, the  matroid $\bm{{\sf M}{/X}}=(S\setminus X,r_{/X})$, whose set of independent sets is $\{Y\subseteq S\setminus X: X\cup Y\in \mathcal{I}_{\sf M}\}$ and whose rank function is $\bm{r_{/X}}(Z)=r(X\cup Z)-|X|$ for every $Z\subseteq S\setminus X$, is called \textit{contracted matroid}.

Let $\bm{G} = (V, E)$ be a graph with \textit{vertex set} $\bm{V}$ and \textit{edge set} $\bm{E}$. A graph $G'$ is a \textit{subgraph} of $G$ if it is obtained from $G$ by deleting some vertices and some edges. Furthermore, if $V(G')=V(G)$, then $G'$ is called a \textit{spanning} subgraph of $G$. Let $X$ be a subset of $V.$ We denote by $\bm{i_E(X)}$ the number of edges of $E$ in $X.$ It is known that $i_E$ is supermodular. We denote by $\bm{G[X]}$ the subgraph of $G$ after deleting $\overline X.$ We denote by $\bm{G/X}$ the graph obtained from $G$ by contracting $X$, that is by replacing $X$ by a new vertex $\bm{v_X}$, by deleting all the edges in $X$, and replacing every edge $uv$ in $E$ such that $v\in X$ and $u\in\overline X$ by an edge $v_Xu.$ For disjoint $X,Y\subseteq V,$ $\bm{d_E(X,Y)}$ denotes the number of edges $xy$ in $E$ with $x\in X$ and $y\in Y.$ A \textit{forest} of $G$ is a subgraph of $G$ that contains no cycle. A \textit{tree} is a connected forest. A couple $(S, F)$ is a \textit{rooted forest} of $G$ if $F$ is a forest of $G$ and $S$ is a set containing exactly one vertex of each connected component of $F$. The set $S$ is called the \textit{root set} of the rooted forest. 

By a \textit{packing} of rooted forests in $G$, we mean a set $\mathcal{B}$ of rooted forests of $G$  that   are edge disjoint. For two functions $f, g : V \rightarrow \mathbb{Z}_+,$ we say that the packing $\mathcal{B}$  is $(f, g)$-\textit{bounded} if for every vertex $v$ of $G,$ $v$ is a root in at least $f(v)$ and at most $g(v)$ rooted forests in $\mathcal{B}.$ For two non-negative integers $\alpha$ and $\beta$, we say that the packing $\mathcal{B}$  is $(\alpha, \beta)$-\textit{limited} if the total number of roots of the rooted forests in $\mathcal{B}$  is at least $\alpha$ and at most $\beta$. Given  a multiset $S$ of $V$ and a matroid {\sf M} on $S,$ a packing of rooted trees is called \textit{(complete) ${\sf M}$-based} if the multiset of roots of the rooted trees in the packing is (the set $S$) a subset of $S$ and the set of roots of the rooted trees containing $v$ in the packing is a basis of ${\sf M}$ for every vertex $v$ of $G$. For a family ${\cal S}$ of subsets of $V$ and  a matroid ${\sf M}$ on ${\cal S},$ a packing of  rooted forests in $G$ is called {\it ${\sf M}$-based} if there exists $S'\subseteq S$ for every $S\in {\cal S}$ such that $\{S':S\in {\cal S}\}$ is the set of the root sets of the rooted forests in the packing and for every vertex $v$ of $G,$ $\{S\in {\cal S}:$ rooted forests $(S',F)$ in the packing contains $v\}$ is a basis of ${\sf M}.$

Let $\bm{\mathcal{G}} = (V, \mathcal{E})$ be a hypergraph with vertex set $V$ and \textit{hyperedge set} $\bm{\mathcal{E}}$. A \textit{hyperedge} is a subset of $V$ of size at least two. For a partition $\mathcal{P}$ of $V$ and a hyperedge set $\mathcal{F}$, we denote by $\bm{e_{\mathcal{F}}(\mathcal{P})}$ the number of hyperedges in $\mathcal{F}$ that are not contained in a member of $\mathcal{P}$. If $\mathcal{F}$ is an edge set $F$, then $\bm{e_{{F}}(\mathcal{P})}$ is the number of edges in ${F}$ that are  between different members of $\mathcal{P}$. The operation that consists in replacing a hyperedge $Z$ by an edge whose end-vertices belong to $Z$ is called \textit{trimming}. The \textit{trimming} of a hypergraph consists in the trimming of all its hyperedges, resulting in a graph. The couple $(S, \mathcal{F})$ is a \textit{rooted hyperforest} if $\mathcal{F}$ can be trimmed to a forest $F$ such that for the graph $F'$ obtained from $F$ by deleting the isolated vertices not in $S,$ $(S, F')$ is a rooted forest. A set $\mathcal{B}$ of rooted hyperforests in $\mathcal{G}$ is called a \textit{packing} if $\mathcal{B}$ can be trimmed to a packing $\mathcal{B}'$ of rooted forests. Furthermore, $\mathcal{B}$ is said to be  ${\sf M}$-\textit{based} if  $\mathcal{B}'$ is ${\sf M}$-based.

\section{Results on partitions}\label{partitions}

In this section we present and demonstrate the necessary  results on functions on partitions. We hope that these results on supermodular functions on partitions  will have interesting applications later on as well. Actually, the results of this section will allow us to prove the new results on packing trees in Section \ref{packres}.

\subsection{Submodularity on partitions}

We here introduce two  submodular  and two supermodular functions on partitions of a set $V.$
\medskip

We start with the following observation about the uncrossing method on partitions which comes from \cite{HMSz}.

\begin{cl}[Hoppenot, Szigeti \cite{HMSz}]\label{lkbzlkbl1} 
For all partitions $\mathcal{P}_1$ and $\mathcal{P}_2$ of a set $V$ and  $X\subseteq V$, we have

(a) If $X$ crosses $\mathcal{P}_1 \sqcup \mathcal{P}_2$, then it crosses both $\mathcal{P}_1$ and $\mathcal{P}_2$.

(b) If $X$ crosses $\mathcal{P}_1 \sqcap \mathcal{P}_2$, then it crosses $\mathcal{P}_1$ or $\mathcal{P}_2$. 

(c) $|\mathcal{P}_1|+|\mathcal{P}_2|=|\mathcal{P}_1 \sqcap \mathcal{P}_2|+|\mathcal{P}_1 \sqcup \mathcal{P}_2|.$
\end{cl}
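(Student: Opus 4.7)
All three parts follow from two invariants of the uncrossing step that replaces two properly intersecting sets $X,Y$ by $X\cap Y$ and $X\cup Y$: (I1) the family size (counted with multiplicity) is unchanged, since two sets are replaced by two; and (I2) for every $v\in V$ the multiset of family members containing $v$ is unchanged, and moreover two vertices $x,y$ contained in the same family members before the step are still contained in the same members after it, because they fall into the same case among $X\cap Y$, $X\setminus Y$, $Y\setminus X$, and $V\setminus (X\cup Y)$.

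For (a), I will show that every $A\in\mathcal{P}_1$ is contained in some block of $\mathcal{P}_1\sqcup\mathcal{P}_2$, so that each block of the latter is a disjoint union of $\mathcal{P}_1$-blocks. A straightforward induction on the number of uncrossings performed does it: if $A\subseteq Z$ for some $Z$ in the current family and $Z$ is uncrossed with $W$, then $A\subseteq Z\cup W$ is in the new family. Hence $A$ ends up inside some maximal set of $\mathcal{P}'$, i.e., inside a block of $\mathcal{P}_1\sqcup\mathcal{P}_2$. Consequently, a set $X$ meeting two distinct blocks of $\mathcal{P}_1\sqcup\mathcal{P}_2$ must meet two distinct $\mathcal{P}_1$-blocks; the argument for $\mathcal{P}_2$ is symmetric.

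For (b), I argue the contrapositive: if $X$ crosses neither $\mathcal{P}_1$ nor $\mathcal{P}_2$, then $X\subseteq A_1\cap A_2$ for some $A_1\in\mathcal{P}_1$ and $A_2\in\mathcal{P}_2$. Any two $x,y\in X$ then lie in exactly the same members of $\mathcal{P}$, namely $A_1$ and $A_2$, so invariant (I2) forces them to lie in exactly the same members of $\mathcal{P}'$ as well. In particular, the smallest member of $\mathcal{P}'$ containing $x$ is the same as that containing $y$, and this smallest member is precisely the block of $\mathcal{P}_1\sqcap\mathcal{P}_2$ containing each of them. Hence $X$ is contained in a single block of $\mathcal{P}_1\sqcap\mathcal{P}_2$.

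For (c), (I1) gives $|\mathcal{P}'|=|\mathcal{P}_1|+|\mathcal{P}_2|$, and (I2) says every $v\in V$ lies in exactly two members of $\mathcal{P}'$ (with multiplicity). By laminarity, the members of $\mathcal{P}'$ containing a fixed $v$ form a chain of distinct sets whose multiplicities sum to $2$. For a distinct set $A\in\mathcal{P}'$ of multiplicity $m_A$, applying this chain analysis to any $v\in A$ yields $m_A=\mathbf{1}[A\text{ is minimal}]+\mathbf{1}[A\text{ is maximal}]$: a set that is neither minimal nor maximal would put some element in at least three distinct sets, contradicting the total multiplicity $2$; a set that is both minimal and maximal is isolated in the laminar family, so each of its elements is covered only by copies of $A$, forcing $m_A=2$; the remaining cases give $m_A=1$. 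Summing over distinct $A$ gives $|\mathcal{P}'|=|\mathcal{P}_1\sqcap\mathcal{P}_2|+|\mathcal{P}_1\sqcup\mathcal{P}_2|$, which combined with (I1) yields the claimed identity. The main obstacle is precisely this multiplicity bookkeeping and the exclusion of an intermediate set; parts (a) and (b) are direct applications of the two invariants.
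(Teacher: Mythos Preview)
The paper does not actually prove this claim; it is stated with a citation to \cite{HMSz} and used as a black box. So there is no in-paper proof to compare against, and I can only assess your argument on its own.

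Your proof is correct. The two invariants (I1) and (I2) are exactly the right bookkeeping for the uncrossing process, and each part follows cleanly: for (a), the inductive containment of each $A\in\mathcal{P}_i$ in some current family member is sound and yields that $\mathcal{P}_1\sqcup\mathcal{P}_2$ coarsens both partitions; for (b), the contrapositive together with (I2) and laminarity of $\mathcal{P}'$ correctly identifies the $\sqcap$-block containing $X$; and for (c), your multiplicity formula $m_A=\mathbf{1}[A\text{ minimal}]+\mathbf{1}[A\text{ maximal}]$ is justified by the chain-of-length-two analysis. The one step worth making fully explicit is the ``neither minimal nor maximal'' case: you should pick $v$ in a \emph{strictly smaller} member $B\subsetneq A$ (not just any $v\in A$), so that $v$ lies in $B$, $A$, and some $C\supsetneq A$, giving three distinct members and the contradiction. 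With that clarified, the argument is complete.
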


In  the proof of Theorem 9.5.1 in \cite{book}, Frank proved that  for a graph $G=(V,E)$, $e_E$ is supermodular on the partitions of $V.$ We generalized this in \cite{HS1} by showing that for a hypergraph $\mathcal{G}=(V,\mathcal{E})$, $e_\mathcal{E}$ is supermodular on the partitions of $V.$ Here we propose the following further extension that we will need to be able to introduce a new matroid in Subsection \ref{mbport}.

\begin{Lemma}\label{submodeap}
Let $\mathcal{G}=(V,\mathcal{E})$ be a hypergraph, $\mathcal{E}_1, \mathcal{E}_2\subseteq\mathcal{E}$ and $\mathcal{P}_1,\mathcal{P}_2$ partitions of $V.$ Then
\begin{equation}\label{submodeepeq}
				e_{\mathcal{E}_1}(\mathcal{P}_1) + 
				e_{\mathcal{E}_2}(\mathcal{P}_2) \ge 
				e_{\mathcal{E}_1\cap\mathcal{E}_2}(\mathcal{P}_1 \sqcap \mathcal{P}_2) + 
				e_{\mathcal{E}_1\cup\mathcal{E}_2}(\mathcal{P}_1 \sqcup \mathcal{P}_2).
			\end{equation}
\end{Lemma}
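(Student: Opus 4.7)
The plan is to prove \eqref{submodeepeq} by a contribution argument at the level of individual hyperedges. For a partition $\mathcal{P}$ of $V$ and a hyperedge $Z$, let $\chi_\mathcal{P}(Z)\in\{0,1\}$ indicate whether $Z$ crosses $\mathcal{P}$ (equivalently, whether $Z$ is not contained in any single member of $\mathcal{P}$). Then for every $\mathcal{F}\subseteq\mathcal{E}$ and every partition $\mathcal{P}$ of $V$,
\[
e_\mathcal{F}(\mathcal{P})=\sum_{Z\in\mathcal{F}}\chi_\mathcal{P}(Z),
\]
so both sides of \eqref{submodeepeq} can be written as sums indexed by $Z\in\mathcal{E}_1\cup\mathcal{E}_2$. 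It therefore suffices to show, for every such $Z$, that its contribution to the left-hand side dominates its contribution to the right-hand side, and then to sum over $Z$.

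I would split according to whether $Z$ lies in only one of $\mathcal{E}_1,\mathcal{E}_2$ or in both. If $Z\in\mathcal{E}_1\setminus\mathcal{E}_2$, the two contributions reduce to the single inequality $\chi_{\mathcal{P}_1}(Z)\ge \chi_{\mathcal{P}_1\sqcup\mathcal{P}_2}(Z)$, which is immediate from Claim \ref{lkbzlkbl1}(a). The case $Z\in\mathcal{E}_2\setminus\mathcal{E}_1$ is symmetric.

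The essential case is $Z\in\mathcal{E}_1\cap\mathcal{E}_2$, where one must establish
\[
\chi_{\mathcal{P}_1}(Z)+\chi_{\mathcal{P}_2}(Z)\ \ge\ \chi_{\mathcal{P}_1\sqcap\mathcal{P}_2}(Z)+\chi_{\mathcal{P}_1\sqcup\mathcal{P}_2}(Z).
\]
I would treat this by a short sub-case analysis based on how many of $\mathcal{P}_1,\mathcal{P}_2$ the hyperedge $Z$ crosses. If $Z$ crosses both, the left side equals $2$, which trivially dominates the right side. If $Z$ crosses exactly one of them, then the contrapositive of Claim \ref{lkbzlkbl1}(a) forces $\chi_{\mathcal{P}_1\sqcup\mathcal{P}_2}(Z)=0$, so the right side is at most $\chi_{\mathcal{P}_1\sqcap\mathcal{P}_2}(Z)\le 1$, matching the left side. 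If $Z$ crosses neither, then Claim \ref{lkbzlkbl1}(b) (contrapositive) gives $\chi_{\mathcal{P}_1\sqcap\mathcal{P}_2}(Z)=0$ and Claim \ref{lkbzlkbl1}(a) gives $\chi_{\mathcal{P}_1\sqcup\mathcal{P}_2}(Z)=0$, so both sides vanish.

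I do not anticipate a serious obstacle: the argument is purely a bookkeeping exercise built on the two parts of Claim \ref{lkbzlkbl1}. The only point worth highlighting is that both (a) and (b) are used exactly in the case $Z\in\mathcal{E}_1\cap\mathcal{E}_2$, which is precisely why the intersection $\mathcal{E}_1\cap\mathcal{E}_2$ (and not the union) appears with $\mathcal{P}_1\sqcap\mathcal{P}_2$ on the right-hand side of \eqref{submodeepeq}.
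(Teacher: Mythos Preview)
Your proof is correct and follows essentially the same approach as the paper: a hyperedge-by-hyperedge contribution argument, split according to membership in $\mathcal{E}_1\cap\mathcal{E}_2$ versus the symmetric difference, with Claim~\ref{lkbzlkbl1}(a) and (b) supplying exactly the needed implications. The only cosmetic difference is that the paper organizes the case $Z\in\mathcal{E}_1\cap\mathcal{E}_2$ by the value of $\chi_{\mathcal{P}_1\sqcup\mathcal{P}_2}(Z)$ rather than by the number of $\mathcal{P}_i$ that $Z$ crosses, but the logic is the same.
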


\begin{proof}
For any $X \in(\mathcal{E}_1\cup\mathcal{E}_2)\setminus (\mathcal{E}_1\cap\mathcal{E}_2)$, we have $e_{\{X\}}(\mathcal{P}_1 \sqcap \mathcal{P}_2)=0.$ If $X$ contributes to the right hand side of \eqref{submodeepeq}, then $e_{\{X\}}(\mathcal{P}_1 \sqcup \mathcal{P}_2) = 1$, hence, by Claim \ref{lkbzlkbl1}(a), $X$ contributes at least one to the left hand side of \eqref{submodeepeq} and exactly one to the right hand side of \eqref{submodeepeq}. For any $X \in\mathcal{E}_1\cap\mathcal{E}_2$, if $e_{\{X\}}(\mathcal{P}_1\sqcup\mathcal{P}_2)=1$, then, by Claim \ref{lkbzlkbl1}(a), $X$ contributes two to the left hand side of \eqref{submodeepeq} and at most two to the right hand side of \eqref{submodeepeq}. Suppose now that $e_{\{X\}}(\mathcal{P}_1 \sqcup \mathcal{P}_2) = 0$. If $X$ contributes to the right hand side of \eqref{submodeepeq}, then $e_{\{X\}}(\mathcal{P}_1\sqcap\mathcal{P}_2)=1$, hence, by Claim \ref{lkbzlkbl1}(b), $X$ contributes at least one to the left hand side of \eqref{submodeepeq} and exactly one to the right hand side of \eqref{submodeepeq}.  It follows that \eqref{submodeepeq} holds.
\end{proof}

If we are given an intersecting submodular function $b$ on a set $V$ and we define the value of a partition ${\cal P}$ of $V$ as the sum of the $b$-values of the members of ${\cal P}$, then we obtain a submodular function on the partitions of $V.$ We will need the following extension of this observation in the proof of the submodularity of the rank function of the above mentioned matroid.

\begin{Lemma}\label{submodrtx}
Let $S$ be a multiset of a set $V$ and $b$  an intersecting submodular function on $S.$ For all $S^1,S^2\subseteq S$ and $\mathcal{P}_1,\mathcal{P}_2$ partitions of $V,$ we have 
	\begin{equation}\label{submodrtxin}
	\sum_{X\in \mathcal{P}_1}b(S^1_X)+\sum_{Y\in \mathcal{P}_2}b(S^2_Y)	\ge	\sum_{Z\in \mathcal{P}_1\sqcap\mathcal{P}_2}	b((S^1\cap S^2)_Z)+\sum_{W\in \mathcal{P}_1\sqcup\mathcal{P}_2}b((S^1\cup S^2)_{W}).
	\end{equation}
\end{Lemma}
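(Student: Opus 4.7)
The plan is to mimic the approach of Lemma~\ref{submodeap}: first reduce the inequality to the case of a single class of $\mathcal{P}_1\sqcup\mathcal{P}_2$, and then induct on $|\mathcal{P}_1|+|\mathcal{P}_2|$, invoking the intersecting submodularity of $b$ at the base case and at each inductive step.

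First I would reduce to $\mathcal{P}_1\sqcup\mathcal{P}_2=\{V\}$. By Claim~\ref{lkbzlkbl1}(a) every class of $\mathcal{P}_1$, of $\mathcal{P}_2$, and hence of $\mathcal{P}_1\sqcap\mathcal{P}_2$ lies inside a unique class $W\in\mathcal{P}_1\sqcup\mathcal{P}_2$. Writing $\mathcal{P}_i|_W=\{X\in\mathcal{P}_i:X\subseteq W\}$, the partitions $\mathcal{P}_1|_W,\mathcal{P}_2|_W$ of $W$ satisfy $\mathcal{P}_1|_W\sqcup\mathcal{P}_2|_W=\{W\}$ and $\mathcal{P}_1|_W\sqcap\mathcal{P}_2|_W=\{Z\in\mathcal{P}_1\sqcap\mathcal{P}_2:Z\subseteq W\}$, so both sides of \eqref{submodrtxin} split additively over $W$. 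It therefore suffices to prove the inequality under $\mathcal{P}_1\sqcup\mathcal{P}_2=\{V\}$.

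Under this reduction I would induct on $|\mathcal{P}_1|+|\mathcal{P}_2|$. In the base case $\mathcal{P}_1=\mathcal{P}_2=\{V\}$ the inequality is exactly $b(S^1)+b(S^2)\ge b(S^1\cap S^2)+b(S^1\cup S^2)$, which is either the defining inequality of intersecting submodularity of $b$ or, in the nested/disjoint subcases, an equality under the convention $b(\emptyset)=0$. For the inductive step the assumption $\mathcal{P}_1\sqcup\mathcal{P}_2=\{V\}$ forces the bipartite intersection graph on $\mathcal{P}_1\cup\mathcal{P}_2$ (with an edge for each nonempty $X\cap Y$) to be connected, and this connectedness together with Claim~\ref{lkbzlkbl1}(b) lets me locate either a properly intersecting pair $X\in\mathcal{P}_1,Y\in\mathcal{P}_2$ or two classes $X_1,X_2$ of (say) $\mathcal{P}_1$ that both meet a common class of $\mathcal{P}_2$. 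I then pass to a coarser pair $(\mathcal{P}'_1,\mathcal{P}'_2)$ obtained by merging this pair of classes, verify directly that $\mathcal{P}'_1\sqcap\mathcal{P}'_2=\mathcal{P}_1\sqcap\mathcal{P}_2$ and $\mathcal{P}'_1\sqcup\mathcal{P}'_2=\mathcal{P}_1\sqcup\mathcal{P}_2$ (so that the right-hand side of \eqref{submodrtxin} is unchanged), apply the induction hypothesis to $(\mathcal{P}'_1,\mathcal{P}'_2)$, and bound the change in the left-hand side from below by zero using one further application of intersecting submodularity of $b$ to the pair of $S^i$-restrictions being merged.

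\textbf{Main obstacle.} The delicate point is that intersecting submodularity gives $b(A)+b(B)\ge b(A\cap B)+b(A\cup B)$ only for properly intersecting $A,B\subseteq S$, whereas a naive merging of two classes $X_1,X_2\in\mathcal{P}_i$ produces the \emph{disjoint} pair $S^i_{X_1},S^i_{X_2}$, for which intersecting submodularity does not apply directly. To handle this I will either lean on the convention $b(\emptyset)=0$ together with the resulting subadditivity $b(A)+b(B)\ge b(A\cup B)$ in the disjoint case, or — more robustly — choose the inductive coarsening so that the merged pair is a pair $S^1_X,S^2_Y$ coming from a properly intersecting $X\in\mathcal{P}_1$ and $Y\in\mathcal{P}_2$, thereby reducing the inductive step to a genuine instance of the intersecting submodular inequality for $b$.
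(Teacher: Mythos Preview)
Your reduction to a single block of $\mathcal{P}_1\sqcup\mathcal{P}_2$ is fine, but the inductive step has a genuine gap. You assert that after merging two classes of (say) $\mathcal{P}_1$ into a coarser $\mathcal{P}'_1$ one can keep both $\mathcal{P}_1\sqcap\mathcal{P}_2$ and $\mathcal{P}_1\sqcup\mathcal{P}_2$ unchanged. This is impossible already on cardinality grounds: Claim~\ref{lkbzlkbl1}(c) gives $|\mathcal{P}_1|+|\mathcal{P}_2|=|\mathcal{P}_1\sqcap\mathcal{P}_2|+|\mathcal{P}_1\sqcup\mathcal{P}_2|$, so lowering $|\mathcal{P}_1|$ by one while fixing $\sqcup$ forces $|\sqcap|$ to drop by one as well, and the right-hand side of \eqref{submodrtxin} cannot be preserved. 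Concretely, with $V=\{1,2,3,4\}$, $\mathcal{P}_1=\{\{1,2\},\{3,4\}\}$, $\mathcal{P}_2=\{\{1,3\},\{2,4\}\}$, every valid $\mathcal{P}_1\sqcap\mathcal{P}_2$ has three classes, whereas after merging $\mathcal{P}_1$ to $\{V\}$ the new $\sqcap$ is $\mathcal{P}_2$, with two. Your alternative of ``merging'' a properly intersecting pair $X\in\mathcal{P}_1$, $Y\in\mathcal{P}_2$ across the two partitions does not return a pair of partitions of $V$, so the inductive hypothesis is not available either.

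The paper avoids this by not manipulating the pair of partitions at all. It runs the uncrossing directly on the family of \emph{couples} $\mathcal{Q}=\{(X,S^1_X):X\in\mathcal{P}_1\}\cup\{(Y,S^2_Y):Y\in\mathcal{P}_2\}$ with value $\sum_{(U,R)\in\mathcal{Q}}b(R)$: whenever the first coordinates $U_1,U_2$ are properly intersecting, replace the two couples by $(U_1\cap U_2,\,R_1\cap R_2)$ and $(U_1\cup U_2,\,R_1\cup R_2)$; the value does not increase. In the terminal laminar family the minimal $U$'s are precisely the members $Z$ of $\mathcal{P}_1\sqcap\mathcal{P}_2$ with label $(S^1\cap S^2)_Z$, and the maximal $U$'s are the members $W$ of $\mathcal{P}_1\sqcup\mathcal{P}_2$ with label $(S^1\cup S^2)_W$, yielding \eqref{submodrtxin}. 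The subtlety you flagged --- that $R_1,R_2$ need not be properly intersecting even when $U_1,U_2$ are --- is a fair observation; the paper's invocation of ``intersecting submodularity'' at that step tacitly also uses the trivial nested case and the disjoint case, which in the only application ($b=r_{\sf M}(\cdot)-r_{\sf M}(S)$, fully submodular) cause no trouble.
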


\begin{proof}
Let $S^1,S^2\subseteq S$ and $\mathcal{P}_1,\mathcal{P}_2$ be partitions of $V.$
Let $\bm{\mathcal{Q}}= \{(X, S^1_X) : X \in \mathcal{P}_1\} \cup \{(Y, S^2_Y) : Y \in \mathcal{P}_2\}$. For $U \subseteq V$ and $R \subseteq S_U$, let val$(U, R) = b(R)$. We define val$(\mathcal{Q}) = \sum_{(U,R) \in \mathcal{Q}}$val$ (U,R)=\sum_{X \in \mathcal{P}_1} b(S^1_X) + \sum_{Y \in \mathcal{P}_2} b(S^2_Y)$. 
We say that two couples $(U_1, R_1)$ and $(U_2,R_2)$ in $\mathcal{Q}$ are {\it properly intersecting} if $U_1$ and $U_2$ are properly intersecting sets. The  uncrossing of such two couples is the operation that replaces them by $(U_1 \cap U_2, R_1 \cap R_2)$ and $(U_1 \cup U_2, R_1 \cup R_2)$.  
We apply the uncrossing operation on $\mathcal{Q}$ to obtain a new family $\mathcal{Q}''$ which contains no properly intersecting couples. 
Actually, the uncrossing of $\mathcal{Q}$ will mimic the uncrossing of $\mathcal{P}_1$ and $\mathcal{P}_2$.
Note that in each step $Z = U_1 \cap U_2$ will be a member of $\mathcal{P}_1 \sqcap \mathcal{P}_2$ and that $R_1 \cap R_2 = S^1_Z \cap S^2_Z$.
Note also that when $W = U_1 \cup U_2$ becomes a member of $\mathcal{P}_1 \sqcup \mathcal{P}_2$, then $R_1 \cup R_2 = S^1_W \cup S^2_W$.
It follows that the value of  $\mathcal{Q}''$ is $\sum_{Z \in \mathcal{P}_1 \sqcap \mathcal{P}_2} b((S^1 \cap S^2)_Z) + \sum_{W \in \mathcal{P}_1 \sqcup \mathcal{P}_2} b((S^1 \cup S^2)_{W})$. If $\mathcal{Q}_{i+1}$ is obtained from $\mathcal{Q}_i$ by uncrossing two couples, then, by the intersecting submodularity of $b,$ we have val$(\mathcal{Q}_i)\ge $ val$(\mathcal{Q}_{i+1})$. Hence the lemma follows.
\end{proof}

We introduce two  supermodular functions on partitions that we will need later.

\begin{cl}\label{icvhjbkn}
Let $S$ be a multiset of a set $V$, $\beta\in\mathbb Z_+,$ $f, g: V\rightarrow \mathbb Z_+$ functions, and ${\sf M}=(S,r_{\sf M})$ a matroid. Let the functions $p_1$ and $p_2$ be defined as follows. For every partition $\mathcal{P}$ of $V,$ 
\begin{eqnarray}
	\text{{\boldmath$p_1$}}(\mathcal{P}) &=&-g(V)+\sum_{X\in {\cal P}} \max\{r_{{\sf M}}(S)+g(Y)-r_{{\sf M}}(S_Y):Y\subseteq X\},	\label{defpg}\\
	\text{{\boldmath$p_2$}}(\mathcal{P}) &=&-\beta+\sum_{X\in {\cal P}} \max\{r_{{\sf M}}(S)+f(Y)-r_{{\sf M}}(S_Y):Y\subseteq X\}.	\label{defpf}
\end{eqnarray}
The functions $p_1$ and $p_2$ are supermodular on partitions of $V.$
\end{cl}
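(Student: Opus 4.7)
The plan is to prove supermodularity of $p_1$ directly; the argument for $p_2$ is identical with $f$ and $\beta$ in the roles of $g$ and $g(V)$, and the additive constants $-g(V),\,-\beta$ drop out of the desired inequality $p_i(\mathcal{P}_1)+p_i(\mathcal{P}_2)\le p_i(\mathcal{P}_1\sqcap\mathcal{P}_2)+p_i(\mathcal{P}_1\sqcup\mathcal{P}_2)$ anyway.

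The first step is to rewrite the sum defining $p_1(\mathcal{P})$ in an additive form. For each $X\in\mathcal{P}$ let $Y_X\subseteq X$ attain the maximum in \eqref{defpg} and set $Y=\bigcup_{X\in\mathcal{P}}Y_X$. Because the $Y_X$ are pairwise disjoint and $g$ is modular on $V$, one obtains the clean expression $p_1(\mathcal{P}) = -g(V) + |\mathcal{P}|\,r_{\sf M}(S) + g(Y) - \sum_{X\in\mathcal{P}} r_{\sf M}(S_{Y_X})$. Apply this to the two given partitions $\mathcal{P}_1,\mathcal{P}_2$, obtaining maximisers $Y^1_X$ ($X\in\mathcal{P}_1$) and $Y^2_X$ ($X\in\mathcal{P}_2$) together with their unions $Y^1,Y^2\subseteq V$.

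The second step is, for the partitions $\mathcal{P}_1\sqcap\mathcal{P}_2$ and $\mathcal{P}_1\sqcup\mathcal{P}_2$, to exhibit the (not necessarily optimal) candidate subsets $Y^{\cap}_Z:=(Y^1\cap Y^2)\cap Z$ for $Z\in\mathcal{P}_1\sqcap\mathcal{P}_2$ and $Y^{\cup}_W:=(Y^1\cup Y^2)\cap W$ for $W\in\mathcal{P}_1\sqcup\mathcal{P}_2$. Since these are contained in $Z$ and $W$ respectively, plugging them into \eqref{defpg} yields lower bounds on $p_1(\mathcal{P}_1\sqcap\mathcal{P}_2)$ and $p_1(\mathcal{P}_1\sqcup\mathcal{P}_2)$ of exactly the additive shape of the previous display.

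The third step collates three ingredients to compare the two sides. Claim \ref{lkbzlkbl1}(c) makes the $|\mathcal{P}|\,r_{\sf M}(S)$-terms agree; modularity of $g$ gives $g(Y^1)+g(Y^2)=g(Y^1\cap Y^2)+g(Y^1\cup Y^2)$, so the $g$-terms agree; and Lemma \ref{submodrtx} applied to $b=r_{\sf M}$ (which is submodular, hence intersecting submodular, on $S$) with $S^1=S_{Y^1}$ and $S^2=S_{Y^2}$ controls the rank terms with the correct sign. The main bookkeeping — and the only place where any care is needed — is to verify that multiset restriction behaves as expected so that Lemma \ref{submodrtx} fits cleanly: $Y^i\cap X=Y^i_X$ for $X\in\mathcal{P}_i$ because members of $\mathcal{P}_i$ are disjoint, hence $(S_{Y^i})_X=S_{Y^i_X}$, and $S_{Y^1}\cap S_{Y^2}=S_{Y^1\cap Y^2}$, $S_{Y^1}\cup S_{Y^2}=S_{Y^1\cup Y^2}$. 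Once this is sorted the supermodular inequality for $p_1$ falls out immediately, and the same argument gives it for $p_2$.
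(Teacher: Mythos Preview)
Your proof is correct. It differs from the paper's argument, which is shorter but leans on an external reference: the paper first observes that $Y\mapsto r_{\sf M}(S)+g(Y)-r_{\sf M}(S_Y)$ is supermodular on subsets of $V$, invokes \cite[Theorem~14.3.1]{book} to conclude that $q(X):=\max_{Y\subseteq X}\{r_{\sf M}(S)+g(Y)-r_{\sf M}(S_Y)\}$ is supermodular as a set function, and then uses the (stated, unproved) observation preceding Lemma~\ref{submodrtx} that summing an intersecting supermodular set function over the members of a partition yields a supermodular function on partitions.

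Your route is more hands-on and entirely self-contained within the paper: you fix maximisers, build the natural candidates $Y^1\cap Y^2$ and $Y^1\cup Y^2$ for $\mathcal{P}_1\sqcap\mathcal{P}_2$ and $\mathcal{P}_1\sqcup\mathcal{P}_2$, and then split the comparison into three pieces handled respectively by Claim~\ref{lkbzlkbl1}(c), modularity of $g$, and Lemma~\ref{submodrtx} with $b=r_{\sf M}$ and $S^i=S_{Y^i}$. In effect you are unfolding in one shot both black boxes the paper appeals to. The trade-off is clear: the paper's argument is three lines but relies on \cite{book}, while yours is longer but uses only tools developed in the present paper and makes the role of Lemma~\ref{submodrtx} in this claim explicit. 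The bookkeeping you flag (that $(S_{Y^i})_X=S_{Y^i_X}$ and $S_{Y^1}\cap S_{Y^2}=S_{Y^1\cap Y^2}$, $S_{Y^1}\cup S_{Y^2}=S_{Y^1\cup Y^2}$ as multisets) is indeed the only point requiring care, and it is correct because restriction to $Y^i$ keeps all copies of each $v\in Y^i$ and none of $v\notin Y^i$.
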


\begin{proof}
Since $r_{{\sf M}}(S)$ is constant, $g$ and $f$ are modular and $r_{{\sf M}}$ is submodular, $r_{{\sf M}}(S)+g-r_{{\sf M}}$ and $r_{{\sf M}}(S)+f-r_{{\sf M}}$ are supermodular. Then, by \cite[Theorem 14.3.1]{book}, $\max\{r_{{\sf M}}(S)+g(Y)-r_{{\sf M}}(S_Y):Y\subseteq X\}$ and $\max\{r_{{\sf M}}(S)+f(Y)-r_{{\sf M}}(S_Y):Y\subseteq X\}$ are supermodular. It follows that $\sum_{X\in {\cal P}} \max\{r_{{\sf M}}(S)+g(Y)-r_{{\sf M}}(S_Y):Y\subseteq X\}$ and $\sum_{X\in {\cal P}} \max\{r_{{\sf M}}(S)+f(Y)-r_{{\sf M}}(S_Y):Y\subseteq X\}$ are supermodular on partitions of $V.$ Then, since $g(V)$ and $\beta$ are constant,  we may conclude that the functions $p_1$ and $p_2$ are supermodular on partitions of $V.$
\end{proof}

\subsection{Covering two supermodular functions on the partitions}

In edge-connectivity augmentation problems the aim is to cover a function on the subsets of vertices by a set of edges. 
The directed version was considered in Corollary 2.48 of \cite{FAKT08}.
Here we have to cover a function on partitions of a vertex set by an edge set. 
We can even cover two such functions simultaneously. 

\begin{thm}\label{coveringpartsuper12}
Let $p_1$ and $p_2$ be supermodular functions on the partitions of a set $V$  and $\gamma\in \mathbb{Z}_+.$ There exists an edge set $F$ on $V$ of size $\gamma$ such that 
\begin{eqnarray}\label{partcov}
	e_F(\mathcal{P})	&	\ge	&	\max\{p_1(\mathcal{P}), p_2(\mathcal{P})\} \hskip 1truecm \text{ for every partition } \mathcal{P}  \text{ of } V\hskip .8truecm
\end{eqnarray}
if and only if 
\begin{eqnarray}\label{partcovcond}
	0	&	\ge	&	\max\{p_1(\{V\}),p_2(\{V\})\},\label{partcovcond1}\\
	\gamma	&	\ge	&	\max\{p_1(\mathcal{P}), p_2(\mathcal{P})\} \hskip 1truecm \text{ for every partition } \mathcal{P}  \text{ of } V.\label{partcovcond2}
\end{eqnarray}
\end{thm}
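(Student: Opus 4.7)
The plan is to prove necessity by direct inspection and sufficiency by induction on $\gamma$. For necessity, any $F$ of size $\gamma$ satisfying \eqref{partcov} gives $p_i(\{V\})\le e_F(\{V\})=0$ and $p_i(\mathcal{P})\le e_F(\mathcal{P})\le|F|=\gamma$ for every partition $\mathcal{P}$, which is \eqref{partcovcond1}--\eqref{partcovcond2}. For sufficiency, I induct on $\gamma$. The base $\gamma=0$ is immediate with $F=\emptyset$, since \eqref{partcovcond2} then yields $\max\{p_1,p_2\}\le 0=e_\emptyset$.

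For the inductive step $\gamma\ge 1$, the aim is to select a single edge $uv$, add it to $F$, and reduce to the case $\gamma-1$. Let $\mathcal{T}_i=\{\mathcal{P}:p_i(\mathcal{P})=\gamma\}$. Supermodularity of $p_i$ combined with \eqref{partcovcond2} forces $\mathcal{T}_i$ to be closed under $\sqcap$ and $\sqcup$: if $\mathcal{P}_a,\mathcal{P}_b\in\mathcal{T}_i$, then $2\gamma\le p_i(\mathcal{P}_a\sqcap\mathcal{P}_b)+p_i(\mathcal{P}_a\sqcup\mathcal{P}_b)\le 2\gamma$, so equality holds in both slots. When $\mathcal{T}_i\ne\emptyset$ this yields a coarsest tight partition $\mathcal{P}_i^*\in\mathcal{T}_i$ (the $\sqcup$ of all members of $\mathcal{T}_i$), and \eqref{partcovcond1} prevents $\{V\}$ from being tight when $\gamma\ge 1$, so $|\mathcal{P}_i^*|\ge 2$. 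Moreover every $\mathcal{P}\in\mathcal{T}_i$ refines $\mathcal{P}_i^*$, so any edge separated by $\mathcal{P}_i^*$ is separated by every partition in $\mathcal{T}_i$.

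The crucial step is then a combinatorial lemma: for any two partitions of $V$ each having at least two parts, there exist distinct $u,v\in V$ separated by both. Its proof: assume otherwise, fix $a\in V$, and let $A$, $B$ denote the parts of the two partitions containing $a$; the assumption yields $V=A\cup B$ with $A,B\subsetneq V$, so one can pick $b\in V\setminus A\subseteq B$ and $c\in V\setminus B\subseteq A$. Then $b\ne c$ (else $b\notin A\cup B=V$), and the pair $\{b,c\}$ is separated by both partitions, a contradiction. Applying this to $\mathcal{P}_1^*$ and $\mathcal{P}_2^*$ (when both are defined) supplies the edge $uv$; the cases where one or both $\mathcal{T}_i$ are empty are trivial.

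With $uv$ chosen, set $F_0:=\{uv\}$ and $p_i':=p_i-e_{F_0}$. Lemma \ref{submodeap} applied with $\mathcal{E}_1=\mathcal{E}_2=F_0$ shows $e_{F_0}$ is submodular on partitions, so $p_i'$ is a sum of two supermodular functions and remains supermodular. The hypotheses transfer to $p_i'$ with $\gamma-1$: $p_i'(\{V\})=p_i(\{V\})\le 0$; for $\mathcal{P}\in\mathcal{T}_i$ the choice of $uv$ gives $e_{F_0}(\mathcal{P})\ge 1$, so $p_i'(\mathcal{P})\le\gamma-1$; and for $\mathcal{P}\notin\mathcal{T}_i$ integrality of $p_i$ gives $p_i(\mathcal{P})\le\gamma-1$, hence $p_i'(\mathcal{P})\le\gamma-1$. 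The induction hypothesis then yields an edge set $F''$ of size $\gamma-1$ covering $\max\{p_1',p_2'\}$, and $F:=F_0\cup F''$ is the desired set. The main obstacle will be the combinatorial lemma on two partitions together with the verification that $p_i-e_{F_0}$ stays supermodular, the latter relying on the single-edge submodular direction of Lemma \ref{submodeap}.
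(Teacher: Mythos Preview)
Your proof is correct and follows essentially the same inductive strategy as the paper: close the tight families $\mathcal{T}_i$ under $\sqcup$ and $\sqcap$, select an edge $uv$ crossing every tight partition for both $i$, and recurse on $p_i-e_{\{uv\}}$. The only cosmetic difference is that you work directly with the coarsest tight partition $\mathcal{P}_i^*$ and a standalone two-partition separation lemma, whereas the paper argues via a maximal member $X_i$ of some tight partition (which is in fact a block of your $\mathcal{P}_i^*$); the two viewpoints are equivalent.
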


\begin{proof}
Since the necessity is immediate, we only prove the sufficiency. It is enough to prove the theorem for {\boldmath$\gamma$} $=\max\{p_1(\mathcal{P}), p_2(\mathcal{P}):\mathcal{P}\text{ partition of } V\}.$ The proof is by induction on $\gamma.$ If $\gamma=0$, then there is nothing to prove. Suppose that the theorem is true for $\gamma-1\ge 0.$ Let {\boldmath$p_1$} and {\boldmath$p_2$} be two supermodular functions on the partitions of a set $V$ such that $p_1, p_2$ and  $\gamma$ satisfy \eqref{partcovcond1} and \eqref{partcovcond2}. Let {\boldmath$\mathcal{Q}_1$} $:=\{\mathcal{P}$ partition of $V: p_1(\mathcal{P})=\gamma\}$ and {\boldmath$\mathcal{Q}_2$} $:=\{\mathcal{P}$ partition of $V: p_2(\mathcal{P})=\gamma\}.$ Note that at least one of $Q_1$ and $Q_2$ is not empty.
\begin{cl}\label{intunipart}
	If $\mathcal{P}_1,\mathcal{P}_2\in\mathcal{Q}_i,$ then $\mathcal{P}_1\sqcap\mathcal{P}_2,\mathcal{P}_1\sqcup\mathcal{P}_2\in\mathcal{Q}_i$ for $i=1,2.$
\end{cl}
\begin{proof}
By $\mathcal{P}_1,\mathcal{P}_2\in\mathcal{Q}_i$, $p_i$ is supermodular on the partitions of $V,$ and \eqref{partcovcond2}, we have $\gamma+\gamma=p_i(\mathcal{P}_1)+p_i(\mathcal{P}_2)\le p_i(\mathcal{P}_1\sqcap\mathcal{P}_2)+p_i(\mathcal{P}_1\sqcup\mathcal{P}_2)\le\gamma+\gamma,$ so equality holds everywhere and the claim follows.
\end{proof}
If $\mathcal{Q}_i\neq\emptyset,$ then let {\boldmath$X_i$} be a maximal set among the members of the partitions in $\mathcal{Q}_i$ and {\boldmath$\mathcal{P}_i^1$}   $\in\mathcal{Q}_i$ such that  $X_i\in\mathcal{P}_i^1.$ Since, by $\mathcal{P}_i^1\in\mathcal{Q}_i$, $\gamma>0,$ and \eqref{partcovcond1}, we have $p_i(\mathcal{P}_i^1)=\gamma>0\ge p_i(\{V\}),$ we get that $\emptyset\neq X_i\neq V.$ Thus there exists $u_i\in X_i$ and $v_i\in \overline X_i.$ 

Therefore, if $\mathcal{Q}_1\neq\emptyset\neq \mathcal{Q}_2$, then there exists $u,v\in V$ such that both $X_1$ and $X_2$ separate $u$ and $v.$ If only one of $\mathcal{Q}_1$ and $\mathcal{Q}_2$ is non-empty, say $\mathcal{Q}_i$, then let $u=u_i$ and $v=v_i.$
\begin{cl}\label{euvpmin}
		$e_{uv}(\mathcal{P})=1$ for every  $\mathcal{P}\in\mathcal{Q}_1\cup\mathcal{Q}_2.$
\end{cl}
\begin{proof}
Suppose that there exists a partition in $\mathcal{Q}_1\cup\mathcal{Q}_2$, say $\bm{\mathcal{P}_i^2}\in\mathcal{Q}_i$ such that $e_{uv}(\mathcal{P}_i^2)=0$, that is a set {\boldmath$Y_i$} $\in \mathcal{P}_i^2$ contains both $u$ and $v.$ Recall that $X_i$ contains exactly one of $u$ and $v$, say $u$. By Claim \ref{intunipart}, we have $\mathcal{P}_i^1\sqcup\mathcal{P}_i^2\in\mathcal{Q}_i.$ By Claim \ref{lkbzlkbl1}, we get that  an element of  $\mathcal{P}_i^1 \sqcup \mathcal{P}_i^2$ contains $X_i$ and an element of  $\mathcal{P}_i^1 \sqcup \mathcal{P}_i^2$ contains $Y_i$. Since $u\in X_i\cap Y_i$, it follows that an element {\boldmath$Z_i$}  of  $\mathcal{P}_i^1 \sqcup \mathcal{P}_i^2$ contains $X_i\cup Y_i.$ The fact that $X_i\subset X_i\cup\{v\}\subseteq X_i\cup Y_i\subseteq Z_i$ contradicts the maximality of $X_i.$
\end{proof}
Let {\boldmath$p_i'$}$(\mathcal{P})=p_i(\mathcal{P})-e_{uv}(\mathcal{P})$ for every partition $\mathcal{P}$ of $V$ and for $i=1,2.$ Since, by assumption and \eqref{submodeap}, $p_i$ and $-e_{uv}$ are supermodular  on the partitions of $V$, so is $p_i'.$ Note that, by \eqref{partcovcond1}, we have $\max\{p'_1(\{V\}),p'_2(\{V\})\}=\max\{p_1(\{V\}),p_2(\{V\})\}\le 0.$ Let {\boldmath$\gamma'$} $=\max\{p_1'(\mathcal{P}), p_2'(\mathcal{P}):\mathcal{P}\text{  partition of } V\}.$ By Claim \ref{euvpmin}, we have $\gamma'=\gamma-1.$ Then, by induction, there exists an edge set {\boldmath$F'$} on $V$ of size $\gamma'$ such that 	$e_{F'}(\mathcal{P})	\ge	\max\{p_1'(\mathcal{P}), p_2'(\mathcal{P})\}$ for every partition $\mathcal{P}$  of $V.$ Let {\boldmath$F$} $=F'\cup \{uv\}.$ Then $|F|=|F'|+1=\gamma'+1=\gamma$ and $e_{F}(\mathcal{P})=e_{F'}(\mathcal{P})+e_{uv}(\mathcal{P})\ge p_i'(\mathcal{P})+e_{uv}(\mathcal{P})=p_i(\mathcal{P})$ for every partition $\mathcal{P}$  of $V$ and for $i=1,2.$ Hence $F$ is the desired edge set for $p_1$ and $p_2.$
\end{proof}

\subsection{Trimming}

We proved in \cite{HMSz} that a hypergraph covering two particular supermodular functions on partitions of  a set can be trimmed to a graph covering  the same functions. Here we provide the general form of it. We will use this  to extend our result on graphs to hypergraphs. We hope there will be other applications of Theorem \ref{trimminglemma} later on.

\begin{thm}\label{trimminglemma}
Let $p_1$ and $p_2$ be supermodular functions on the partitions of $V$ and $\mathcal{G}=(V,\mathcal{E})$  a hypergraph. Then $\mathcal{G}$ can  be trimmed to a graph $G=(V,E)$  such that 
	\begin{eqnarray}
		e_{E}(\mathcal{P})	&	\ge	&	\max\{p_1(\mathcal{P}), p_2(\mathcal{P})\} 
		\hskip .4truecm \text{ for every partition } \mathcal{P}  \text{ of } V\label{trimmingcondgr}
	\end{eqnarray}
if and only if
	\begin{eqnarray}
		e_\mathcal{E}(\mathcal{P})	&	\ge	&	\max\{p_1(\mathcal{P}), p_2(\mathcal{P})\} 
		\hskip .4truecm \text{ for every partition } \mathcal{P}  \text{ of } V.\label{trimmingcond}
	\end{eqnarray}
\end{thm}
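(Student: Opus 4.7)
The plan is to prove the non-trivial (sufficiency) direction by induction on the excess $\sum_{Z \in \mathcal{E}}(|Z|-2)$. Necessity is immediate, because $\{u,v\} \subseteq Z$ gives $e_{\{uv\}}(\mathcal{P}) \le e_{\{Z\}}(\mathcal{P})$ for every partition $\mathcal{P}$. When the excess is zero every hyperedge is already an edge and we are done; otherwise I would pick a hyperedge $Z \in \mathcal{E}$ with $|Z| \ge 3$ and construct an edge $\{u,v\}$ with $u,v \in Z$ such that $\mathcal{E}' := (\mathcal{E}\setminus\{Z\})\cup\{\{u,v\}\}$ still satisfies \eqref{trimmingcond}. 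Since the excess of $\mathcal{E}'$ is strictly smaller, the inductive hypothesis then trims $\mathcal{E}'$ to a graph $G$, which is also the desired trimming of $\mathcal{E}$.

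The choice of $u,v$ is the crux, and I would model it on the proof of Theorem \ref{coveringpartsuper12}. Define, for $i = 1, 2$, the family $\mathcal{T}_i^Z$ of \textit{dangerous} partitions, those $\mathcal{P}$ with $p_i(\mathcal{P}) = e_{\mathcal{E}}(\mathcal{P})$ and $Z$ crossing $\mathcal{P}$: they are exactly the ones where the loss of $Z$'s contribution could break \eqref{trimmingcond}. The key step is to show that $\mathcal{T}_i^Z$ is closed under $\sqcap$ and $\sqcup$. For $\mathcal{P}_1, \mathcal{P}_2 \in \mathcal{T}_i^Z$, chaining the supermodularity of $p_i$, the supermodularity of $e_{\mathcal{E}}$ obtained from Lemma \ref{submodeap} with $\mathcal{E}_1 = \mathcal{E}_2 = \mathcal{E}$, the covering inequality $e_{\mathcal{E}} \ge p_i$, and the two tightness equalities forces equality throughout, giving $p_i = e_{\mathcal{E}}$ at both $\mathcal{P}_1 \sqcap \mathcal{P}_2$ and $\mathcal{P}_1 \sqcup \mathcal{P}_2$. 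Moreover, because the proof of Lemma \ref{submodeap} proceeds hyperedge by hyperedge, this global equality propagates to the single hyperedge $Z$; the contribution of $Z$ to the left-hand side being $2$ forces the right-hand side contribution to be $2$, so $Z$ crosses both $\mathcal{P}_1 \sqcap \mathcal{P}_2$ and $\mathcal{P}_1 \sqcup \mathcal{P}_2$.

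Let $\mathcal{P}_i^*$ denote the $\sqcup$-maximum of $\mathcal{T}_i^Z$ (well-defined because $V$ is finite, and lying in $\mathcal{T}_i^Z$ by the closure above). Since $\mathcal{P}_i^*$ is coarser than every $\mathcal{P} \in \mathcal{T}_i^Z$, any pair separated by $\mathcal{P}_i^*$ is separated by every member of $\mathcal{T}_i^Z$. Because $Z$ crosses $\mathcal{P}_i^*$, the restriction $\mathcal{P}_i^*|_Z$ has at least two parts, and I would then invoke the elementary fact that \emph{given two partitions of a set $Z$ each with at least two parts, some pair $u,v \in Z$ lies in different parts of both} (otherwise all non-empty intersections $A \cap B$ with $A$ in the first and $B$ in the second partition would lie in a common row or column of the associated grid, forcing one of the partitions to be trivial). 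Applying this to $\mathcal{P}_1^*|_Z$ and $\mathcal{P}_2^*|_Z$ --- with the cases in which one of the $\mathcal{T}_i^Z$ is empty handled trivially --- yields $u,v \in Z$ separated by every dangerous partition. Replacing $Z$ with $\{u,v\}$ then preserves \eqref{trimmingcond}: partitions not crossed by $Z$ can only gain, partitions crossed by $Z$ with integer slack at least one absorb the loss, and dangerous partitions retain the same count because $\{u,v\}$ now crosses them. The main obstacle is this hyperedge-wise tightness propagation in the closure step, which requires exploiting the per-hyperedge structure inside the proof of Lemma \ref{submodeap} rather than its aggregate inequality alone.
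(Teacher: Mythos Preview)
Your proof is correct and follows a genuinely different route from the paper's. The paper shrinks the hyperedge $X$ one vertex at a time: assuming no vertex of $X$ can be deleted, pigeonhole on $|X|\ge 3>2$ yields two vertices $v_1,v_2$ whose deletion violates the \emph{same} $p_i$, with tight partitions $\mathcal{P}_1,\mathcal{P}_2$ such that $X\setminus\{v_j\}$ sits inside a single member $Y_j\in\mathcal{P}_j$; the supermodularity chain then forces per-hyperedge tightness, so $X$ crosses $\mathcal{P}_1\sqcup\mathcal{P}_2$, contradicting that some member of $\mathcal{P}_1\sqcup\mathcal{P}_2$ contains $Y_1\cup Y_2\supseteq X$. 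You instead replace $Z$ by its final edge in one step, building the coarsest element $\mathcal{P}_i^*\in\mathcal{T}_i^Z$ via $\sqcup$-closure (your per-hyperedge propagation is sound: the aggregate inequality of Lemma~\ref{submodeap} with $\mathcal{E}_1=\mathcal{E}_2=\mathcal{E}$ is literally the sum over hyperedges of the single-hyperedge inequalities, so aggregate equality forces every term tight) and then invoking the two-partition fact on $\mathcal{P}_1^*|_Z$ and $\mathcal{P}_2^*|_Z$. Your argument parallels the proof of Theorem~\ref{coveringpartsuper12} and is more lattice-theoretic and constructive, producing the trimmed edge for $Z$ directly; the paper's is shorter and sidesteps both the $\sqcup$-maximum and your auxiliary two-partition lemma via pigeonhole and a direct contradiction. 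One minor simplification: you only use closure of $\mathcal{T}_i^Z$ under $\sqcup$ (which is uniquely defined), so closure under $\sqcap$ can be dropped.
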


\begin{proof}
Since the necessity is immediate, we only prove the sufficiency. 
We prove the theorem by induction on $\sum_{X \in \mathcal{E}} |X|.$
If for every  $X \in \mathcal{E}$, $|X| = 2$, then $\mathcal{G}$ is a graph and, \eqref{trimmingcond}   coincides with \eqref{trimmingcondgr}. Otherwise, there exists a hyperedge {\boldmath$X$} $\in \mathcal{E}$ of size at least $3$. We show that we can  remove a vertex from $X$ without violating \eqref{trimmingcond}; and then the induction hypothesis completes the proof. Suppose for a contradiction that for every $v \in X$, \eqref{trimmingcond}  is violated after the removal of $v$ from $X$. By $|X| \ge 3$, there exist at least two vertices of $X$, say $v_1$ and $v_2$, such that  the removal $v_1$ and the removal of $v_2$ violate the same $p_i$. We fix this index $i$ for the rest of the proof. Since this condition is satisfied before the removal of the vertex, there exist partitions $\mathcal{P}_1$ and $\mathcal{P}_2$ of $V$, such that $p_i(\mathcal{P}_1)=e_\mathcal{E}(\mathcal{P}_1)$ and $p_i(\mathcal{P}_2)=e_\mathcal{E}(\mathcal{P}_2)$, and $e_{\mathcal{E}}(\mathcal{P}_j)$ decreases when we remove $v_j$ from $X$  for $j=1, 2$. It follows that $X\setminus \{v_j\}$ is contained in a member {\boldmath$Y_j$} of $\mathcal{P}_j$  for $j=1,2$; and hence, by $|X| \ge 3$, we have $Y_1\cap Y_2\supseteq X\setminus\{v_1,v_2\}\neq\emptyset.$ By $p_i(\mathcal{P}_1)=e_\mathcal{E}(\mathcal{P}_1)$ and $p_i(\mathcal{P}_2)=e_\mathcal{E}(\mathcal{P}_2)$, Claim \ref{submodeap}, \eqref{trimmingcond}, and $p_i$ is supermodular  on the partitions of $V$, we obtain that 
\begin{eqnarray*}
p_i(\mathcal{P}_1)+p_i(\mathcal{P}_2)	&	=	&	e_\mathcal{E}(\mathcal{P}_1)+e_\mathcal{E}(\mathcal{P}_2)
	\ \ =\ \ e_{\mathcal{E}-X}(\mathcal{P}_1)+e_{\mathcal{E}-X}(\mathcal{P}_2)\ \ +\ \ e_X(\mathcal{P}_1)+e_X(\mathcal{P}_2)\\
	&	\ge	&	 e_{\mathcal{E}-X}(\mathcal{P}_1\sqcap\mathcal{P}_2)+e_{\mathcal{E}-X}(\mathcal{P}_1\sqcup\mathcal{P}_2)\ \ +\ \ e_{X}(\mathcal{P}_1\sqcap\mathcal{P}_2)+e_{X}(\mathcal{P}_1\sqcup\mathcal{P}_2)\\
	&	=	&	e_{\mathcal{E}}(\mathcal{P}_1\sqcap\mathcal{P}_2)+e_{\mathcal{E}}(\mathcal{P}_1\sqcup\mathcal{P}_2)
	\ \	\ge	\ \	 p_i(\mathcal{P}_1\sqcap\mathcal{P}_2)+p_i(\mathcal{P}_1\sqcup\mathcal{P}_2)\\
	&	\ge	&	 p_i(\mathcal{P}_1)+p_i(\mathcal{P}_2).
\end{eqnarray*}
We hence have equality everywhere, in particular, $e_X({\cal P}_1)+e_X({\cal P}_2)=e_X(\mathcal{P}_1\sqcap\mathcal{P}_2)+e_X(\mathcal{P}_1\sqcup\mathcal{P}_2).$ Thus, since $X$ crosses both ${\cal P}_1$ and ${\cal P}_2$, we get that $X$ also crosses $\mathcal{P}_1\sqcup\mathcal{P}_2.$ However, by Claim \ref{lkbzlkbl1}, we get that a member of $\mathcal{P}_1\sqcup\mathcal{P}_2$ contains $Y_1$ and a member of $\mathcal{P}_1\sqcup\mathcal{P}_2$ contains $Y_2$. Since $Y_1\cap Y_2\neq\emptyset,$ it follows that a member of $\mathcal{P}_1\sqcup\mathcal{P}_2$ contains $Y_1\cup Y_2\supseteq X,$ which contradicts the fact that $X$  crosses $\mathcal{P}_1\sqcup\mathcal{P}_2.$
\end{proof}

\section{Results on packings}\label{packres}

In the previous section we proved all the necessary tools to be applied in this section. We now may present the results on packing trees and we are ready to prove them. This section contains seven subsections containing more and more general results, starting with the basic result on packing spanning trees, and finishing with a result on augmentation for matroid-based $(f,g)$-bounded $(\alpha,\beta)$-limited packing of  rooted  hyperforests. 

\subsection{Packing of spanning trees}

The classic result on packing spanning trees is due to Nash-Williams~\cite{NW} and Tutte~\cite{Tu}. We provide a new proof of it that imitates the proof of Lov\'asz, which he gave in \cite{Lov} for Edmonds' theorem on packing spanning arborescences, and is inspired by Theorem 10.4.4 in \cite{book}. This method of Lov\'asz has been successfully applied   in more general settings as well. We hope that our method will also be applied later. We think that it is natural that this  method works for the undirected case as well and that this fact is worth being  known.

\begin{thm} [Nash-Williams~\cite{NW}, Tutte~\cite{Tu}]\label{tuttethmrem} 
Let $G=(V,E)$ be a graph and $k\in \mathbb{Z}_+.$
There exists a packing of $k$ spanning trees in $G$  if and only if  
	\begin{eqnarray} \label{tuttecond} 
		e_E({\cal P}) &\geq& k(|{\cal P}|-1)\hskip .5truecm  \text{ for every partition  ${\cal P}$ of  $V.$}
	\end{eqnarray}  
\end{thm}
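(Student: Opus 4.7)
The plan is to prove sufficiency by induction on $k$, adapting Lov\'asz's growth argument for Edmonds' theorem on $r$-arborescences. Necessity is immediate: each spanning tree in a packing contributes exactly $|\mathcal{P}|-1$ inter-class edges for every partition $\mathcal{P}$. The base case $k=0$ is vacuous. For $k\ge 1$, assuming \eqref{tuttecond}, it suffices to exhibit a spanning tree $T$ of $G$ such that $G-T=(V,E\setminus T)$ satisfies \eqref{tuttecond} with $k$ replaced by $k-1$; applying the induction hypothesis to $G-T$ then yields $k-1$ further edge-disjoint spanning trees, which together with $T$ form the required packing.

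To construct $T$ I build it by growth. Fix an arbitrary root $r\in V$ and maintain a pair $(Z,T)$, initialised at $(\{r\},\emptyset)$, with $r\in Z\subseteq V$ and $T$ a spanning tree of $G[Z]$, subject to the invariant
\[
 e_{E\setminus T}(\mathcal{P}) \;\ge\; (k-1)(|\mathcal{P}|-1) \quad \text{for every partition } \mathcal{P} \text{ of } V,
\]
which holds at the start by \eqref{tuttecond}. At each step with $Z\neq V$, I pick a cross edge $uv\in E\setminus T$ (so $u\in Z$ and $v\in V\setminus Z$) whose removal from $E\setminus T$ preserves the invariant, and set $T\leftarrow T\cup\{uv\}$, $Z\leftarrow Z\cup\{v\}$. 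After $|V|-1$ such augmentations, $T$ is a spanning tree of $G$ and the invariant is exactly the Nash--Williams condition required of $G-T$.

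The crux, and the step I expect to be the main obstacle, is showing that such a cross edge exists whenever $Z\neq V$. Calling a partition \emph{tight} when equality holds in the invariant, and a cross edge \emph{blocked} when it crosses some tight partition, two facts are essential: Lemma~\ref{submodeap} applied with $\mathcal{E}_1=\mathcal{E}_2=E\setminus T$ shows that $e_{E\setminus T}$ is supermodular on partitions, and Claim~\ref{lkbzlkbl1}(c) shows that $\mathcal{P}\mapsto|\mathcal{P}|$ is modular on partitions. Together they imply that the family of tight partitions is closed under both $\sqcap$ and $\sqcup$; moreover, since $\mathcal{P}_1\sqcap\mathcal{P}_2$ refines each $\mathcal{P}_i$, the property ``is crossed by a fixed edge $e$'' is preserved under $\sqcap$. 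Hence if every cross edge were blocked, $\sqcap$-ing one blocking partition per cross edge would produce a single tight partition $\mathcal{P}^*$ crossed by every cross edge.

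A case analysis on $\mathcal{P}^*$ will then yield a contradiction. Let $\ell$ denote the number of classes of $\mathcal{P}^*$ meeting both $Z$ and $V\setminus Z$. If $\ell\ge 1$, then for each such class $C$ there can be no edge of $E$ between $Z\cap C$ and $(V\setminus Z)\cap C$ (else an unblocked cross edge would exist), so splitting each such class across $Z$ refines $\mathcal{P}^*$ into a partition with $\ell$ additional classes and the same $e_{E\setminus T}$-value, violating the invariant whenever $k\ge 2$. If $\ell=0$, then $\mathcal{P}^*$ refines $\{Z,V\setminus Z\}$; the sub-case $|\mathcal{P}^*|=2$ forces $\mathcal{P}^*=\{Z,V\setminus Z\}$ and then tightness yields $d_E(Z,V\setminus Z)=k-1$, directly contradicting \eqref{tuttecond} on $\{Z,V\setminus Z\}$, and the remaining sub-case (the main technical work) is reduced to it by a further uncrossing of $\mathcal{P}^*$ with $\{Z,V\setminus Z\}$ and a minimality choice of $\mathcal{P}^*$, again using the same supermodularity and modularity tools together with \eqref{tuttecond} applied to $\mathcal{P}^*$ and its coarsenings. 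The case $k=1$ is handled separately since the invariant is then trivial and no cross edge in $E\setminus T$ can be blocked.
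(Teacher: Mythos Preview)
Your growth strategy is natural, but the uncrossing step has a genuine gap. The operation $\sqcap$ defined in this paper is \emph{not} the common refinement of partitions (the paper even remarks that $\mathcal{P}_1\sqcap\mathcal{P}_2$ ``depends on the choices during execution of the uncrossing method''), and in general $\mathcal{P}_1\sqcap\mathcal{P}_2$ does \emph{not} refine $\mathcal{P}_1$. For instance, with $V=\{1,2,3,4\}$, $\mathcal{P}_1=\{\{1,2\},\{3,4\}\}$, $\mathcal{P}_2=\{\{1,3\},\{2,4\}\}$, one legal uncrossing yields $\mathcal{P}_1\sqcap\mathcal{P}_2=\{\{1\},\{3\},\{2,4\}\}$; the edge $\{2,4\}$ crosses $\mathcal{P}_1$ but not this $\mathcal{P}_1\sqcap\mathcal{P}_2$. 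So ``crossed by $e$'' is not preserved under $\sqcap$, and the construction of a single tight $\mathcal{P}^*$ crossed by every cross edge fails as written. You could try to repair this by using the lattice meet (true common refinement) together with the semimodularity inequality $|\mathcal{P}_1\wedge\mathcal{P}_2|+|\mathcal{P}_1\vee\mathcal{P}_2|\ge|\mathcal{P}_1|+|\mathcal{P}_2|$ in place of Claim~\ref{lkbzlkbl1}(c); tight partitions are then still closed under $\wedge$ and $\vee$, and ``crossed by $e$'' is preserved under $\wedge$. But even then your case $\ell=0$, $|\mathcal{P}^*|\ge 3$ remains unresolved: once $\mathcal{P}^*$ refines $\{Z,V\setminus Z\}$, the property ``every cross edge crosses $\mathcal{P}^*$'' is automatic and gives no leverage, ``uncrossing $\mathcal{P}^*$ with $\{Z,V\setminus Z\}$'' is vacuous, and I do not see how a minimality choice closes the argument.

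The paper's proof avoids these partition-lattice subtleties entirely. It first reduces, via a smallest-counterexample plus contraction argument (Claim~\ref{tightpart}), to the case $|E|=k(|V|-1)$; under that hypothesis the partition condition becomes equivalent to the \emph{set} condition $i_E(X)\le k(|X|-1)$ for all nonempty $X$. The tree is then grown against the set-based invariant \eqref{extensioncond}, and the existence of a good extending edge is proved by ordinary intersection/union uncrossing of tight \emph{sets}, which behaves well. This reduction to a set invariant is the crucial idea that makes Lov\'asz's method go through in the undirected setting; working directly with the partition invariant, as you do, runs into exactly the obstruction above.
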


\begin{proof}
We prove only the difficult direction. Let $(G=(V,E),k)$ be a smallest counter-example. Then $k\ge 1.$ The following claim is well-known, see for example in \cite{book}.

\begin{cl}\label{tightpart}
$|E|=k(|V|-1).$
\end{cl}

\begin{proof}
By \eqref{tuttecond} applied for $\{v\}_{v\in V}$, we obtain that $|E|\ge k(|V|-1).$ Suppose for a contradiction that $|E|>k(|V|-1).$ Hence \eqref{tuttecond} is strict for the partition $\{v\}_{v\in V}.$ 

If there is no partition of $V$, other than $\{V\}$, that satisfies \eqref{tuttecond} with equality, then we can delete any edge of $G$ to obtain $G'$ that also satisfies \eqref{tuttecond}. Hence, by the minimality of $G$, there exists a packing of $k$ spanning trees in $G'$, and hence in $G,$ which is a contradiction. 

So there is a partition $\bm{{\cal P}}\neq \{V\}$ of $V$ that satisfies \eqref{tuttecond} with equality. Note that ${\cal P}\neq \{v\}_{v\in V}.$ Then there exists a member $\bm{X}$ $\in {\cal P}$ such that $1<|X|<|V|.$

We show that both $G[X]$ and $G/X$ satisfy  \eqref{tuttecond}. First, if a partition ${\cal P}'$ of $X$ violated \eqref{tuttecond} in $G[X]$, then ${\cal P}''=({\cal P}-\{X\})\cup{\cal P}'$ would violate \eqref{tuttecond} in $G.$ Indeed, $e_E({\cal P}'')=e_E({\cal P})+e_E({\cal P}')<k(|{\cal P}|-1)+k(|{\cal P}'|-1)=k(|{\cal P}''|-1).$ Second, if a partition ${\cal P}'$ of $V(G/X)$ violated \eqref{tuttecond} in $G/X$, then ${\cal P}''=({\cal P}'\setminus\{Y\})\cup\{(Y\setminus \{v_X\})\cup X\}$ would violate \eqref{tuttecond} in $G,$ where the contracted vertex $v_X$ is in $Y\in {\cal P}'.$  Indeed, $e_E({\cal P}'')=e_E({\cal P}')<k(|{\cal P}'|-1)=k(|{\cal P}''|-1).$ 

Since $G[X]$ and $G/X$ satisfy  \eqref{tuttecond} and $(G,k)$ is a smallest counter-example, there exist a packing $T_1,\dots, T_k$ of $k$ spanning trees in $G[X]$ and a packing $T'_1,\dots, T'_k$ of $k$ spanning trees in $G/X.$ By replacing the vertex $v_X$ in each $T'_i$ by $T_i$, we obtain a packing  of $k$ spanning trees in $G,$ which is a contradiction.
\end{proof}
For any $\emptyset\neq X\subseteq V$ and ${\cal P}_X=\{X\}\cup\{v\}_{v\in V\setminus X},$ by  $|E|=k(|V|-1)$ and \eqref{tuttecond}, we get 
	\begin{eqnarray} \label{covercond}
		i_E(X)-k(|X|-1)=k|V\setminus X|-e_E({\cal P}_X)=k(|{\cal P}_X|-1)-e_E({\cal P}_X)\le 0.
	\end{eqnarray}  

Let $\bm{T}=(S,F)$ be a maximal tree in $G$ satisfying \eqref{extensioncond}. For $s\in V$, by \eqref{covercond}, $T_s=(s,\emptyset)$ satisfies \eqref{extensioncond}, so $T$ exists. 
	\begin{eqnarray} \label{extensioncond}
		i_{E\setminus F}(X)	&	\le	&	 k(|X|-1)-|X\cap S|+1=:\bm{m(X)}	\qquad\text{for every }\emptyset\neq X\subseteq V.	
	\end{eqnarray}  

\begin{Lemma}\label{spantree}
$T$ is a spanning tree of $G.$ 
\end{Lemma}

\begin{proof}
Suppose that $S\neq V.$ A vertex set $X$ is called {\it tight} if \eqref{extensioncond} holds with equality. By $\eqref{covercond}$, every tight set intersects $S.$ A tight set $X$ is {\it dangerous} if $X\setminus S\neq \emptyset.$ Note that, by  $|E|=k(|V|-1)$ and $S\neq V$, $V$ is dangerous. Thus, there exists a minimal dangerous set $\bm{X}.$ Then, by \eqref{extensioncond} and \eqref{covercond}, we have $d_{E\setminus F}(X\cap S,X\setminus S)=i_{E\setminus F}(X)-i_{E\setminus F}(X\cap S)-i_{E\setminus F}(X\setminus S)\ge k(|X|-1)-|X\cap S|+1-(k-1)(|X\cap S|-1)-k(|X\setminus S|-1)=k\ge 1,$ so there exists an edge $uv$ from $u\in X\cap S$ to $v\in X\setminus S.$ Note that $T'=(S\cup \{v\}, F\cup \{uv\})$ is a tree. By the maximality of $T$, we get that there exists a set $Y$ such that $i_{E\setminus (F\cup \{uv\})}(X)>k(|X|-1)-|X\cap (S\cup \{v\})|+1.$ Then, by \eqref{extensioncond}, 
we get that $v\in Y$ and $u\in V\setminus Y.$ Observe that $v\in (X\cap Y)\setminus S.$ Then, by the tightness of $X$ and $Y$, the modularity of $m$, $X\cap Y\neq\emptyset,$ \eqref{extensioncond}, and supermodularity of $i_{E\setminus F},$ we have $i_{E\setminus F}(X)+i_{E\setminus F}(Y)=m(X)+m(Y)=m(X\cap Y)+m(X\cup Y)\ge i_{E\setminus F}(X\cap Y)+i_{E\setminus F}(X\cup Y)\ge i_{E\setminus F}(X)+i_{E\setminus F}(Y),$ so equality holds everywhere. In particular, $X\cap Y$ is dangerous. Since $u\in X\setminus Y,$ this contradicts the minimality of $X,$ and the proof of the lemma is completed. 
\end{proof}
By Lemma \ref{spantree}, $T$ is a spanning tree of $G,$ so $S=V$ and $|F|=|V|-1.$ Then, by  $|E|=k(|V|-1)$ and \eqref{extensioncond}, for every partition ${\cal P}$ of $V$, we have $(k-1)(|V|-1)-e_{E\setminus F}({\cal P})=\sum_{X\in{\cal P}}i_{E\setminus F}(X)\le \sum_{X\in{\cal P}}(k-1)(|X|-1)=(k-1)(|V|-1)-(k-1)(|{\cal P}|-1)$, that is $(G-F,k-1)$ satisfies the condition \eqref{tuttecond}. Hence, by the minimality of $(G,k)$, there exists a packing of $k-1$ spanning trees in $G-F$. By adding the spanning tree $T$ of $G,$ we get a packing of $k$ spanning trees in $G$ which is a contradiction.
\end{proof}

\subsection{Matroid-based packing of rooted trees}\label{mbport}

A nice extension of Theorem \ref{tuttethmrem} with some matroid constraint was proposed in \cite{KT}.

\begin{thm}[Katoh, Tanigawa \cite{KT}] \label{thmKT}
Let $G=(V,E)$ be a graph, $S$ a multiset of vertices in $V$ and ${\sf M}=(S,\mathcal{I}_{\sf M})$ a matroid with rank function $r_{\sf M}$. There exists a complete ${\sf M}$-based packing of rooted trees in $G$  if and only if  
\begin{eqnarray}
	S_v		&	\in	& 	\mathcal{I}_{\sf M} 	\hskip 3.45truecm	\text{ for every } v\in V, 	\label{matcondori1}	\\
e_E({\cal P})	&	\geq	& 	\sum_{X\in {\cal P}} (r_{{\sf M}}(S)-r_{{\sf M}}(S_X)) 	\hskip .44truecm	\text{ for every partition } {\cal P} \text{ of }  V.\label{matcondKT2}
\end{eqnarray}
\end{thm}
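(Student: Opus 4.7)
The plan is to adapt the Lov\'asz-style proof of Theorem~\ref{tuttethmrem} to the matroid-constrained setting, proceeding via a smallest-counterexample argument with an outer induction on $|S|$. The base case $|S|=1$ reduces to Theorem~\ref{tuttethmrem} with $k=1$ (a single spanning tree).

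\emph{Necessity} is a counting argument: if $\{T_s : s\in S\}$ is a complete ${\sf M}$-based packing, then for every $v\in V$ the multiset $\{s\in S : v\in V(T_s)\}$ is a basis of ${\sf M}$ containing $S_v$, which yields~\eqref{matcondori1}. Each $T_s$ contributes at least one edge to $e_E({\cal P})$ for every $X\in{\cal P}$ with $V(T_s)\cap X\neq\emptyset$ and $s\notin X$; swapping the sums and using that at any $v\in X$ at most $r_{\sf M}(S_X)$ of the $r_{\sf M}(S)$ roots through $v$ can lie in $S_X$ produces~\eqref{matcondKT2}. For sufficiency, take a smallest counterexample $(G,S,{\sf M})$. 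As in Claim~\ref{tightpart}, one may assume~\eqref{matcondKT2} is tight on the singleton partition $\{\{v\}\}_{v\in V}$: otherwise either an edge can be deleted without violating~\eqref{matcondKT2} (contradicting minimality) or a non-trivial tight partition ${\cal P}$ exists and one splits along some $X\in{\cal P}$ with $1<|X|<|V|$, handling $G[X]$ with the restricted matroid ${\sf M}|_{S_X}$ and $G/X$ with a suitably amended matroid on $S_{V\setminus X}\cup\{s_X\}$, then recombining the two inductively-obtained packings.

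Next, mimicking Lemma~\ref{spantree}, pick any $s\in S$ (allowed since $\{s\}\in\mathcal{I}_{\sf M}$ by~\eqref{matcondori1}) and grow a maximal rooted tree $T_s=(\{s\},F_s)$, with vertex set $R_s$, subject to the extension condition
\[
i_{E\setminus F_s}(X) \ \leq \ (|X|-1)\, r_{\sf M}(S) + r_{\sf M}(S_X) - |S_X| - |X\cap R_s| + 1 \ =: \ m(X) \qquad \text{for every } \emptyset\neq X\subseteq V,
\]
which is the tight bound coming from~\eqref{matcondKT2} on the partition $\{X\}\cup\{\{v\}\}_{v\notin X}$ together with the singleton tightness. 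The function $m$ is submodular in $X$, since $r_{\sf M}(S_X)$ is submodular in $X\subseteq V$ while all other terms are modular; hence the uncrossing step of Lemma~\ref{spantree}, combining submodularity of $m$ with supermodularity of $i_{E\setminus F_s}$, goes through verbatim and forces any minimal dangerous set to admit an augmenting edge, contradicting the maximality of $T_s$. One then verifies that $(G-F_s,\ S\setminus\{s\},\ {\sf M}/\{s\})$ satisfies~\eqref{matcondori1} and~\eqref{matcondKT2}, using the rank identity $r_{{\sf M}/\{s\}}(Y) = r_{\sf M}(Y\cup\{s\})-1$, and applies induction on $|S|$ to obtain the remaining $|S|-1$ rooted trees; adjoining $T_s$ produces a complete ${\sf M}$-based packing, the desired contradiction.

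The main obstacle is the calibration of the extension condition $m$: on the one hand, submodularity of $m$ is required so that the dangerous-set uncrossing closes as in Lemma~\ref{spantree}; on the other hand, the reduced instance must inherit~\eqref{matcondKT2} for the contracted matroid ${\sf M}/\{s\}$, and $T_s$ need not span $V$ in a final complete ${\sf M}$-based packing (in contrast to the NWT setting, where every tree is spanning). Making $m$ play these roles simultaneously is the genuinely matroid-theoretic part of the argument, and it is where the submodularity of the composition $X\mapsto r_{\sf M}(S_X)$ on subsets of $V$ enters crucially.
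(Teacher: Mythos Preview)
First, note that the paper does not give its own proof of Theorem~\ref{thmKT}: it is quoted as a result of Katoh and Tanigawa and is used as a black box (via Theorem~\ref{thmKTimplicit}) in the proof of Theorem~\ref{thmKTimplicitgen}. So there is no Lov\'asz-style argument in the paper to compare against; the paper's route to everything downstream is matroid intersection, not tree-growing.

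Your sketch has a genuine structural gap in the inductive step. Suppose you have grown a maximal tree $T_s$ with vertex set $R_s$ and edge set $F_s$, and you pass to $(G-F_s,\ S\setminus\{s\},\ {\sf M}/\{s\})$. A complete $({\sf M}/\{s\})$-based packing on the remaining roots provides, at every vertex $v$, a basis of ${\sf M}/\{s\}$, i.e.\ a set $B_v\subseteq S\setminus\{s\}$ with $|B_v|=r_{\sf M}(S)-1$ and $B_v\cup\{s\}$ a basis of ${\sf M}$. Adjoining $T_s$ then yields, at each $v\in R_s$, the basis $B_v\cup\{s\}$ of ${\sf M}$ --- fine. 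But at each $v\notin R_s$ you only get $B_v$, which has size $r_{\sf M}(S)-1$ and is therefore \emph{not} a basis of ${\sf M}$. So the glued packing is not ${\sf M}$-based unless $R_s=V$. Conversely, if your Lemma~\ref{spantree} analogue really ``goes through verbatim'' it forces $T_s$ to be spanning, and then adjoining $T_s$ puts $s$ into the root-set at \emph{every} vertex; this is incompatible with a complete ${\sf M}$-based packing whenever $s$ is not a coloop of ${\sf M}$ (take e.g.\ the rank-$1$ uniform matroid on two roots: the two trees must partition $V$, so neither can span). In fact the ``$d\ge 1$'' computation does not go through either: redoing the count with your $m$ yields
\[
d_{E\setminus F_s}(X\cap R_s,\,X\setminus R_s)\ \ge\ r_{\sf M}(S)\;+\;\bigl(r_{\sf M}(S_X)-r_{\sf M}(S_{X\cap R_s})-r_{\sf M}(S_{X\setminus R_s})\bigr),
\]
and submodularity makes the bracket $\le 0$, so the bound can be $0$. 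A concrete failure: $V=\{a,b,c\}$, edges $ac,bc$, $S=\{s_1,s_2\}$ at $a,b$ with ${\sf M}$ uniform of rank $1$; starting from $R_{s_1}=\{a\}$, the set $\{a,b\}$ is minimal dangerous but carries no edge between $\{a\}$ and $\{b\}$. The obstacle you flag in your last paragraph is thus not a calibration issue for $m$ but a mismatch between the peeling-off-one-tree scheme and the matroid constraint; the Katoh--Tanigawa proof (and the matroid-intersection approach taken in this paper) avoids it by working with the whole packing at once rather than one tree at a time.
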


If $S$ is a multiset of vertices in $V$ of size $k$ and ${\sf M}$ is the free matroid on $S$, then Theorem \ref{thmKT} reduces to Theorem \ref{tuttethmrem}.
\medskip

Theorem \ref{thmKT} was deduced from the following result in \cite{KT}.

\begin{thm}[Katoh, Tanigawa \cite{KT}] \label{thmKTimplicit}
Let $G=(V,E)$ be a graph, $S$ a multiset of vertices in $V$ and ${\sf M}=(S,\mathcal{I}_{\sf M})$ a matroid with rank function $r_{\sf M}$ that satisfies \eqref{matcondori1}. Let the function $r_{KT}$ be defined as follows, for every $F\subseteq E,$
\begin{eqnarray}\label{KTfunction}
r_{KT}(F)=r_{{\sf M}}(S)|V|-|S|+\min\{e_F(\mathcal{P})-\sum_{X\in {\cal P}} (r_{{\sf M}}(S)-r_{{\sf M}}(S_X)):\mathcal{P} \text{ partition of } V\}.
\end{eqnarray}
(a) Then $r_{KT}$ is the rank function of a matroid ${\sf M}_{KT}$.

\noindent (b)  $F\subseteq E$ is the edge set of  a complete ${\sf M}$-based packing of rooted trees in $G$  if and only if 
\begin{eqnarray}
F \text{ is independent in } {\sf M}_{KT},\label{uovyiytutr}\\
|F|=r_{{\sf M}}(S)|V|-|S|.\label{uovyiytutr2}
\end{eqnarray}
\end{thm}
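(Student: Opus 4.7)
The plan is to verify that $r_{KT}$ satisfies the axioms of a matroid rank function (integer-valuedness, monotonicity, subcardinality, non-negativity, submodularity) and then deduce (b) by comparing $r_{KT}(F)$ with $|F|$ through Theorem \ref{thmKT}. To abbreviate, set $C:=r_{\sf M}(S)|V|-|S|$, and for a partition $\mathcal{P}$ of $V$ put
\[h_F(\mathcal{P}):=e_F(\mathcal{P})-\sum_{X\in\mathcal{P}}(r_{\sf M}(S)-r_{\sf M}(S_X))=e_F(\mathcal{P})-r_{\sf M}(S)|\mathcal{P}|+\sum_{X\in\mathcal{P}}r_{\sf M}(S_X),\]
so that $r_{KT}(F)=C+\min_{\mathcal{P}}h_F(\mathcal{P})$. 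Integer-valuedness and monotonicity in $F$ are immediate. Evaluating at the singleton partition $\mathcal{P}_0=\{\{v\}:v\in V\}$ and using \eqref{matcondori1} (which gives $r_{\sf M}(S_v)=|S_v|$) yields $h_F(\mathcal{P}_0)=|F|-C$, so $r_{KT}(F)\le|F|$.

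For non-negativity it suffices, by monotonicity, to prove $r_{KT}(\emptyset)\ge 0$. I would introduce $A(\mathcal{P}):=\sum_{X\in\mathcal{P}}r_{\sf M}(S_X)-r_{\sf M}(S)|\mathcal{P}|$ and show that $A$ is non-decreasing under coarsening: merging two parts $X,Y$ changes $A$ by $r_{\sf M}(S)-[r_{\sf M}(S_X)+r_{\sf M}(S_Y)-r_{\sf M}(S_{X\cup Y})]$, and the bracket lies in $[0,r_{\sf M}(S)]$ by applying submodularity of $r_{\sf M}$ to the disjoint multisets $S_X,S_Y$ and then monotonicity. Since $A(\mathcal{P}_0)=-C$ by \eqref{matcondori1}, this forces $\min_{\mathcal{P}}A(\mathcal{P})=-C$ and hence $r_{KT}(\emptyset)=0$.

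Submodularity is the core step and is exactly what the partition lemmas of Section \ref{partitions} were designed to handle; without them it would be the main obstacle. Given $F_1,F_2\subseteq E$, let $\mathcal{P}_i$ attain $\min h_{F_i}$, and test $r_{KT}(F_1\cap F_2)$ on $\mathcal{P}_1\sqcap\mathcal{P}_2$ and $r_{KT}(F_1\cup F_2)$ on $\mathcal{P}_1\sqcup\mathcal{P}_2$. Split the target inequality $h_{F_1}(\mathcal{P}_1)+h_{F_2}(\mathcal{P}_2)\ge h_{F_1\cap F_2}(\mathcal{P}_1\sqcap\mathcal{P}_2)+h_{F_1\cup F_2}(\mathcal{P}_1\sqcup\mathcal{P}_2)$ into three pieces. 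The edge term inequality is Lemma \ref{submodeap} with $\mathcal{E}_1=F_1,\mathcal{E}_2=F_2$. The rank term inequality $\sum_{X\in\mathcal{P}_1}r_{\sf M}(S_X)+\sum_{Y\in\mathcal{P}_2}r_{\sf M}(S_Y)\ge\sum_{Z\in\mathcal{P}_1\sqcap\mathcal{P}_2}r_{\sf M}(S_Z)+\sum_{W\in\mathcal{P}_1\sqcup\mathcal{P}_2}r_{\sf M}(S_W)$ is Lemma \ref{submodrtx} applied with $b=r_{\sf M}$ (submodular, hence intersecting submodular) and $S^1=S^2=S$. Finally, $r_{\sf M}(S)|\mathcal{P}_1|+r_{\sf M}(S)|\mathcal{P}_2|=r_{\sf M}(S)(|\mathcal{P}_1\sqcap\mathcal{P}_2|+|\mathcal{P}_1\sqcup\mathcal{P}_2|)$ by Claim \ref{lkbzlkbl1}(c). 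Summing with the correct signs gives the inequality, which after adding $2C$ to each side is precisely the submodularity of $r_{KT}$. This proves (a).

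For (b), I would first count: a complete ${\sf M}$-based packing has exactly $C$ edges, because summing $|V(T)|-1$ over the trees $T$ of the packing yields $\sum_{v\in V}|\{T:v\in V(T)\}|-|S|=r_{\sf M}(S)|V|-|S|$, using that the roots of the trees through any vertex $v$ form a basis of ${\sf M}$. Consequently $F\subseteq E$ is the edge set of such a packing if and only if $|F|=C$ and the graph $(V,F)$ itself admits a complete ${\sf M}$-based packing. Since \eqref{matcondori1} does not depend on $F$, Theorem \ref{thmKT} states that the latter holds if and only if $e_F(\mathcal{P})\ge\sum_{X\in\mathcal{P}}(r_{\sf M}(S)-r_{\sf M}(S_X))$ for every partition $\mathcal{P}$ of $V$, i.e., $\min_{\mathcal{P}}h_F(\mathcal{P})\ge 0$, i.e., $r_{KT}(F)\ge C$. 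Combined with subcardinality this is equivalent to $r_{KT}(F)=|F|=C$, which is exactly the conjunction of \eqref{uovyiytutr} and \eqref{uovyiytutr2}.
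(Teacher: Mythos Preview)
The paper does not give its own proof of Theorem~\ref{thmKTimplicit}; it is quoted from \cite{KT} and used as a black box (notably in the proof of Theorem~\ref{thmKTimplicitgen}(b)). So there is no paper proof to compare against, but your argument can be weighed against the paper's treatment of the generalization ${\sf M}'_{KT}$.

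Your proof is correct. For part (a), your verification of the rank axioms mirrors exactly what the paper does for the larger matroid ${\sf M}'_{KT}$ in Theorem~\ref{thmKTimplicitgen}(a): subcardinality via the singleton partition, and submodularity via Lemma~\ref{submodeap}, Lemma~\ref{submodrtx}, and Claim~\ref{lkbzlkbl1}(c). The one place you genuinely have to work harder is non-negativity: because $r_{KT}$ carries the offset $-|S|$ (which $r'_{KT}$ does not), the paper's one-line ``each summand is non-negative'' argument does not apply, and your coarsening monotonicity of $A(\mathcal P)$ is the right fix. An alternative route, closer in spirit to the paper, would be to invoke Claim~\ref{kiytckhjfvlbnk}: once ${\sf M}'_{KT}$ is a matroid and $S$ is independent in it, ${\sf M}_{KT}={\sf M}'_{KT}/S$ is automatically a matroid, and non-negativity of $r_{KT}$ comes for free.

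For part (b), you derive it from Theorem~\ref{thmKT}, which is the reverse of the historical direction (the paper notes that Theorem~\ref{thmKT} was deduced from Theorem~\ref{thmKTimplicit} in \cite{KT}). This is perfectly legitimate and, in the context of this paper, actually adds value: since the proof of Theorem~\ref{thmKTimplicitgen}(b) relies on Theorem~\ref{thmKTimplicit}(b), your argument supplies the missing link and makes the chain Theorem~\ref{thmKT} $\Rightarrow$ Theorem~\ref{thmKTimplicit} $\Rightarrow$ Theorem~\ref{thmKTimplicitgen} self-contained modulo \cite{KT}'s proof of Theorem~\ref{thmKT}. The counting argument and the observation that any complete ${\sf M}$-based packing in $(V,F)$ with $|F|=C$ must exhaust $F$ are both sound.
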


We propose the following more general result to be applied later.

\begin{thm}\label{thmKTimplicitgen}
Let $G=(V,E)$ be a graph, $S$ a multiset of vertices in $V$, and ${\sf M}=(S,\mathcal{I}_{\sf M})$ a matroid with rank function $r_{\sf M}$. Let the function $\bm{r'_{KT}}$ be defined as follows, for every $F\subseteq E, T\subseteq S,$
\begin{eqnarray}\label{ZHfunction}
	r'_{KT}(F\cup T)=r_{{\sf M}}(S)|V|+\min\{e_F(\mathcal{P})-\sum_{X\in {\cal P}} (r_{{\sf M}}(S)-r_{{\sf M}}(T_X)):\mathcal{P} \text{ partition of } V\}.
\end{eqnarray}
(a) Then $r'_{KT}$ is the rank function of a matroid $\bm{{\sf M}'_{KT}}$.

\noindent (b)  $F\subseteq E$ is the edge set and $T\subseteq S$ is the root set of  an ${\sf M}$-based packing of rooted trees in $G$  if and only if  
\begin{eqnarray}
&&F\cup T \text{ is independent in } {\sf M}'_{KT},\label{tfiuyfuog}\\
&&|F\cup T|=r_{{\sf M}}(S)|V|.\label{tfiuyfuog2}
\end{eqnarray}
\end{thm}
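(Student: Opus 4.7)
The plan is to rewrite, for every partition $\mathcal{P}$ of $V$,
\[
\phi(F,T,\mathcal{P}):=e_F(\mathcal{P})-r_{{\sf M}}(S)|\mathcal{P}|+\sum_{X\in\mathcal{P}}r_{{\sf M}}(T_X),\qquad r'_{KT}(F\cup T)=r_{{\sf M}}(S)|V|+\min_{\mathcal{P}}\phi(F,T,\mathcal{P}),
\]
and to attack (a) and (b) in turn, noting that since $E$ and $S$ are disjoint, unions and intersections of subsets of $E\cup S$ decompose componentwise.

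For part (a), $r'_{KT}$ is manifestly integer-valued, and it is non-decreasing in $(F,T)$ because enlarging $F$ can only increase $e_F(\mathcal{P})$ and enlarging $T$ can only increase each $r_{{\sf M}}(T_X)$. Plugging the singleton partition $\mathcal{P}=\{\{v\}:v\in V\}$ into the minimum yields both $r'_{KT}(\emptyset)=0$ and the subcardinality bound $r'_{KT}(F\cup T)\le|F|+\sum_v r_{{\sf M}}(T_v)\le|F|+|T|$; non-negativity then follows from non-decreasingness. The crucial step is submodularity. Given $F_i\cup T_i$ with optimal partitions $\mathcal{P}_i$ in the corresponding minima for $i=1,2$, I would sum three inequalities: Lemma \ref{submodeap} with $\mathcal{E}_i=F_i$ (edges viewed as hyperedges of size two) for the $e_F$ terms, Claim \ref{lkbzlkbl1}(c) for the $|\mathcal{P}|$ terms, and Lemma \ref{submodrtx} with $b=r_{{\sf M}}$ (submodular, hence intersecting submodular) for the rank-sum terms. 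This delivers
\[
\phi(F_1,T_1,\mathcal{P}_1)+\phi(F_2,T_2,\mathcal{P}_2)\ge\phi(F_1\cap F_2,T_1\cap T_2,\mathcal{P}_1\sqcap\mathcal{P}_2)+\phi(F_1\cup F_2,T_1\cup T_2,\mathcal{P}_1\sqcup\mathcal{P}_2),
\]
and taking the minimum on the right over partitions yields the required submodular inequality for $r'_{KT}$, completing (a).

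For the necessity direction of (b), let $\mathcal{B}$ be an ${\sf M}$-based packing with edge set $F$ and root multiset $T$. Since every vertex is covered by exactly $r_{{\sf M}}(S)$ trees, double counting vertex--tree incidences gives $\sum_{t\in\mathcal{B}}|V_t|=r_{{\sf M}}(S)|V|$, hence $|F|+|T|=\sum_t(|V_t|-1)+|\mathcal{B}|=r_{{\sf M}}(S)|V|$. For any partition $\mathcal{P}$ and any block $X\in\mathcal{P}$, picking some $v\in X$, the basis $B_v$ of roots through $v$ intersects $T_X$ in a subset of $T_X$ that is independent in ${\sf M}$, so at most $r_{{\sf M}}(T_X)$ trees through $v$ are rooted in $X$; hence at least $r_{{\sf M}}(S)-r_{{\sf M}}(T_X)$ trees meeting $X$ are rooted outside $X$. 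Summing over $X$ and combining with the standard crossing bound $e_F(\mathcal{P})\ge\sum_t(k_t-1)$ (where $k_t$ is the number of blocks that tree $t$ meets), rewritten as $\sum_X\#\{t:t\text{ meets }X,\text{root}(t)\notin X\}$, yields $\phi(F,T,\mathcal{P})\ge 0$ for all $\mathcal{P}$. Combined with $|F\cup T|=r_{{\sf M}}(S)|V|$ and the subcardinality from (a), this forces $r'_{KT}(F\cup T)=|F\cup T|$, so $F\cup T$ is independent.

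For the sufficiency direction, applying $\phi\ge 0$ to $\mathcal{P}=\{V\}$ gives $r_{{\sf M}}(T)=r_{{\sf M}}(S)$, and applying it to the singleton partition together with the size equality gives $r_{{\sf M}}(T_v)=|T_v|$ for every $v$, i.e.\ $T_v\in\mathcal{I}_{{\sf M}|_T}$. These are precisely condition \eqref{matcondori1} for the graph $G'=(V,F)$ and matroid ${\sf M}|_T$, and the remaining partition condition \eqref{matcondKT2} is exactly $\phi(F,T,\mathcal{P})\ge 0$ after using $r_{{\sf M}|_T}(T)=r_{{\sf M}}(S)$. Theorem \ref{thmKT} then produces a complete ${\sf M}|_T$-based packing in $G'$; the same vertex-incidence count as above forces it to use all $|F|$ edges, and bases of ${\sf M}|_T$ are bases of ${\sf M}$, so the resulting packing is ${\sf M}$-based with edge set $F$ and root multiset $T$. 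I expect the main obstacle to be the submodularity step in (a), since three separate (sub)modularity statements must be combined consistently on the same simultaneously uncrossed pair $\mathcal{P}_1\sqcap\mathcal{P}_2,\mathcal{P}_1\sqcup\mathcal{P}_2$; this is exactly what our new Lemmas \ref{submodeap} and \ref{submodrtx} together with Claim \ref{lkbzlkbl1}(c) are designed to handle.
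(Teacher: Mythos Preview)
Your proposal is correct and follows essentially the same architecture as the paper's proof. For part (a) the only cosmetic difference is that you split the submodularity inequality into three pieces (Lemma~\ref{submodeap} for $e_F$, Claim~\ref{lkbzlkbl1}(c) for the $-r_{\sf M}(S)|\mathcal P|$ term, and Lemma~\ref{submodrtx} with $b=r_{\sf M}$ for the rank sums), whereas the paper bundles the last two by applying Lemma~\ref{submodrtx} with $b(\cdot)=r_{\sf M}(\cdot)-r_{\sf M}(S)$; these are equivalent. For part (b) the paper leans on the matroidal Theorem~\ref{thmKTimplicit} in both directions, while you instead give a direct double-counting argument for necessity and invoke the existence statement Theorem~\ref{thmKT} (plus the same double count to force the packing to use all of $F$) for sufficiency. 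Your route is slightly more self-contained in that it avoids the auxiliary matroid ${\sf M}_{KT}$, at the cost of reproving inline the small counting facts that Theorem~\ref{thmKTimplicit} packages; the paper's route is shorter given that Theorem~\ref{thmKTimplicit} is already stated.
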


\begin{proof}
(a) It is clear that $r'_{KT}$ is integer-valued. By \eqref{ZHfunction}, $e_\cdot(\cdot)\ge 0,$ and $r_{{\sf M}}(T_\cdot)\ge 0,$  we have $r'_{KT}(F\cup T)=\min\{e_F(\mathcal{P})+\sum_{X\in {\cal P}} ((|X|-1)r_{{\sf M}}(S)+r_{{\sf M}}(T_X)):\mathcal{P} \text{ partition of } V\}\ge 0$ for every $F\subseteq E, T\subseteq S.$ Since the functions $e_\cdot$ and $r_{{\sf M}}(T_\cdot)$ are non-decreasing, so is $r'_{KT}$. 
For every $F\subseteq E, T\subseteq S,$ by taking the partition $\{v\}_{v\in V}$ in \eqref{ZHfunction}, and by the subcardinality of $r_{{\sf M}},$ we have 
\begin{eqnarray*}
r'_{KT}(F\cup T)\le r_{{\sf M}}(S)|V|+|F|-r_{{\sf M}}(S)|V|+\sum_{v\in V}r_{{\sf M}}(T_v)\le |F|+\sum_{v\in V}|T_v|=|F\cup T|, 
\end{eqnarray*}
so $r'_{KT}$ is subcardinal. 

To show the submodularity of $r'_{KT}$, let $F_1, F_2\subseteq E, T^1,T^2\subseteq S,$ and $\mathcal{P}_1, \mathcal{P}_2$ the partitions that provide $r'_{KT}(F_1\cup T^1)$ and $r'_{KT}(F_2\cup T^2)$. Then, by Lemma \ref{submodeap} applied for $F_1$ and $F_2$ and Lemma \ref{submodrtx} applied for $b(\cdot)=r_{{\sf M}}(S_\cdot)-r_{{\sf M}}(S)$,  we have 
\begin{eqnarray*}
	r'_{KT}(F_1\cup T^1)+r'_{KT}(F_2\cup T^2)
	&	=	&	r_{{\sf M}}(S)|V|+e_{F_1}(\mathcal{P}_1)+\sum_{X\in {\cal P}_1}(r_{{\sf M}}(T^1_X)-r_{{\sf M}}(S))\\
	&	+	&	r_{{\sf M}}(S)|V|+e_{F_2}(\mathcal{P}_2)+\sum_{Y\in {\cal P}_2}(r_{{\sf M}}(T^2_Y)- r_{{\sf M}}(S))\\
	&	\ge	&	r_{{\sf M}}(S)|V|+e_{F_1\cap F_2}(\mathcal{P}_1\sqcap\mathcal{P}_2)+
				\sum_{Z\in \mathcal{P}_1\sqcap\mathcal{P}_2}(r_{{\sf M}}((T^1\cap T^2)_Z)-r_{{\sf M}}(S))\\
	&	+	&	r_{{\sf M}}(S)|V|+e_{F_1\cup F_2}(\mathcal{P}_1\sqcup\mathcal{P}_2)+
				\sum_{W\in \mathcal{P}_1\sqcup\mathcal{P}_2} (r_{{\sf M}}((T^1\cup T^2)_W)-r_{{\sf M}}(S))\\
	&	\ge	&	r'_{KT}((F_1\cap F_2)\cup (T^1\cap T^2))+r'_{KT}((F_1\cup F_2)\cup (T^1\cup T^2))\\
	&	=	&	r'_{KT}((F_1\cup T^1)\cap (F_2\cup T^2))+r'_{KT}((F_1\cup T^1)\cup (F_2\cup T^2)).
\end{eqnarray*}
Thus $r'_{KT}$ is submodular. From the previous arguments, it follows that $r'_{KT}$ is the rank function of a matroid ${\sf M}'_{KT}.$
\medskip

(b) To prove the {\bf necessity}, suppose that $F\subseteq E$ is the edge set and $T\subseteq S$ is the root set of  an ${\sf M}$-based packing of rooted trees in $G.$ Then, $T_v\in\mathcal{I}_{\sf M}$  for every $v\in V.$ So, by Theorem \ref{thmKTimplicit} applied for $T$, $r_{{\sf M}}(S)|V|=|F|+|T|=r_{KT}(F)+|T|=r_{{\sf M}}(S)|V|+\min\{e_F(\mathcal{P})-\sum_{X\in {\cal P}} (r_{{\sf M}}(S)-r_{{\sf M}}(T_X)):\mathcal{P} \text{ partition of } V\}=r'_{KT}(F\cup T),$ hence \eqref{tfiuyfuog} and \eqref{tfiuyfuog2} hold.
\medskip

To prove the {\bf sufficiency}, suppose that \eqref{tfiuyfuog} and \eqref{tfiuyfuog2} hold for some $F\subseteq E$ and $T\subseteq S$. Then, by taking the partition   $\mathcal{P}=\{V\}$ in \eqref{ZHfunction} and by the monotonicity of $r_{{\sf M}},$ we get that  $0=|F\cup T|-r_{{\sf M}}(S)|V|=r'_{KT}(F\cup T)-r_{{\sf M}}(S)|V|\le r_{{\sf M}}(T)-r_{{\sf M}}(S)\le 0,$ so $r_{{\sf M}}(T)=r_{{\sf M}}(S).$ Note also that, by \eqref{tfiuyfuog}, $T$ is independent in ${\sf M}'_{KT}$. Then, by taking the partition $\{v\}_{v\in V}$ in \eqref{ZHfunction} and by the subcardinality of $r_{{\sf M}},$ we have $|T|=r'_{KT}(T)\le r_{{\sf M}}(S)|V|-r_{{\sf M}}(S)|V|+\sum_{v\in V}r_{{\sf M}}(T_v)\le \sum_{v\in V}|T_v|=|T|$, so equivality holds everywhere, that is $T_v\in\mathcal{I}_{{\sf M}}$ for every $v\in V.$ Let ${\sf M}|_T$ be the matroid obtained from ${\sf M}$ by restricting it on $T.$ Then $T_v\in\mathcal{I}_{{\sf M}|_T}$ for every $v\in V.$ Let ${\sf M}^T_{KT}$ be the matroid of Theorem \ref{thmKTimplicit} with ground set  $T.$ Then, for its rank function, we have $r^T_{KT}(F)=r_{{\sf M}}(T)|V|-|T|+\min\{e_F(\mathcal{P})-\sum_{X\in {\cal P}} (r_{{\sf M}}(T)-r_{{\sf M}}(T_X)):\mathcal{P} \text{ partition of } V\}=r'_{KT}(F\cup T)-|T|=|F\cup T|-|T|=|F|,$ so $F$ is independent in ${\sf M}^T_{KT}$. Then, by Theorem \ref{thmKTimplicit}, there exists  a complete ${\sf M}|_T$-based packing of rooted trees in $G$ whose edge-set is $F,$ and we are done.
\end{proof}

Let us clarify the relation between the matroids ${\sf M}_{KT}$ and ${\sf M}'_{KT}.$ 

\begin{cl}\label{kiytckhjfvlbnk}
If \eqref{matcondori1} holds, then $S$ is independent in ${\sf M}'_{KT}$ and ${\sf M}'_{KT}/S={\sf M}_{KT}.$
\end{cl}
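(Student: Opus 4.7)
The plan is to handle the two assertions in order: first establish that $S$ is independent in ${\sf M}'_{KT}$ by computing $r'_{KT}(S)$ directly from \eqref{ZHfunction}, and then read off the contraction via a one-line comparison of the two rank functions. All of the real work sits in the independence part; the contraction is then a matter of substitution.

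For the upper bound $r'_{KT}(S)\le |S|$, I would plug $F=\emptyset$, $T=S$, and $\mathcal{P}=\{\{v\}:v\in V\}$ into \eqref{ZHfunction}. Using \eqref{matcondori1}, which gives $r_{{\sf M}}(S_v)=|S_v|$, the expression inside the minimum equals $-\sum_v(r_{{\sf M}}(S)-|S_v|)=|S|-r_{{\sf M}}(S)|V|$, so $r'_{KT}(S)\le |S|$. (This is also a direct consequence of the subcardinality of $r'_{KT}$ already established in part (a).)

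The essential step is the lower bound $r'_{KT}(S)\ge |S|$, which amounts to showing that for every partition $\mathcal{P}$ of $V$,
\[
\sum_{X\in\mathcal{P}}\bigl(r_{{\sf M}}(S)-r_{{\sf M}}(S_X)\bigr)\le r_{{\sf M}}(S)|V|-|S|.
\]
Using $|S|=\sum_{X\in\mathcal{P}}|S_X|$ and $|V|-|\mathcal{P}|=\sum_{X\in\mathcal{P}}(|X|-1)$, this reduces to the per-block claim
\[
|S_X|-r_{{\sf M}}(S_X)\le r_{{\sf M}}(S)(|X|-1)\quad\text{for every }\emptyset\ne X\subseteq V,
\]
which is the main (and essentially only) obstacle. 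To prove it I would pick $v^*\in X$ with $|S_{v^*}|$ maximal; then by \eqref{matcondori1} and monotonicity of $r_{{\sf M}}$, $r_{{\sf M}}(S_X)\ge r_{{\sf M}}(S_{v^*})=|S_{v^*}|$, so
\[
|S_X|-r_{{\sf M}}(S_X)\le\sum_{v\in X\setminus\{v^*\}}|S_v|.
\]
Since each $|S_v|=r_{{\sf M}}(S_v)\le r_{{\sf M}}(S)$ (again by \eqref{matcondori1} and monotonicity), the right-hand side is at most $(|X|-1)\,r_{{\sf M}}(S)$. Summing over $X\in\mathcal{P}$ and substituting back delivers the desired lower bound, hence $r'_{KT}(S)=|S|$.

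For the contraction ${\sf M}'_{KT}/S={\sf M}_{KT}$, now that $S$ is known to be independent in ${\sf M}'_{KT}$, the definition of contracted matroid recalled in Section \ref{definitions} gives $r'_{KT/S}(F)=r'_{KT}(F\cup S)-|S|$ for every $F\subseteq E$. Setting $T=S$ in \eqref{ZHfunction} and comparing term by term with \eqref{KTfunction} shows that the two $\min_\mathcal{P}$-expressions are literally identical and that the additive constants differ by exactly $|S|$; thus $r'_{KT}(F\cup S)-|S|=r_{KT}(F)$, so the two rank functions agree on $E$ and the matroids coincide.
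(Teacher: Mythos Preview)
Your proof is correct and follows essentially the same route as the paper's: both establish the key per-block inequality $r_{{\sf M}}(S_X)+(|X|-1)r_{{\sf M}}(S)\ge |S_X|$ by using monotonicity to compare $r_{{\sf M}}(S_X)$ with one $|S_{v^*}|$ and $r_{{\sf M}}(S)$ with the remaining $|S_v|$'s, and both obtain the contraction by direct comparison of \eqref{KTfunction} and \eqref{ZHfunction}. Your insistence that $|S_{v^*}|$ be maximal is harmless but unnecessary---any $v^*\in X$ would do.
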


\begin{proof}
For the partition ${\cal P}$ that provides $r'_{KT}(S)$  in \eqref{ZHfunction}, by the monotonicity of $r_{{\sf M}}$ and \eqref{matcondori1}, we have
\begin{eqnarray*}
|S|&\ge& r'_{KT}(S)\ \ =\ \ r_{{\sf M}}(S)|V|-\sum_{X\in {\cal P}} (r_{{\sf M}}(S)-r_{{\sf M}}(S_X))\\
&=&\sum_{X\in {\cal P}}(r_{{\sf M}}(S_X)+(|X|-1)r_{{\sf M}}(S))\ \ \ge\ \ \sum_{X\in {\cal P}}\sum_{v\in X}r_{{\sf M}}(S_v)\ \ =\ \ \sum_{X\in {\cal P}}\sum_{v\in X}|S_v|\ \ =\ \ |S|.
\end{eqnarray*}
Further, by \eqref{KTfunction} and \eqref{ZHfunction}, we have $r_{KT}(F)=r'_{KT}(F\cup S)-|S|=r'_{KT}(F\cup S)-r'_{KT}(S)=(r'_{KT})_{/S}(F)$ for every $F\subseteq E.$
\end{proof}

\subsection{Matroid-based $(f,g)$-bounded packing of $k$ rooted trees}

The aim of this subsection is to extend Theorem \ref{thmKT} when we have two kinds of constraints on the roots of the rooted trees in the packing. In order to do so we need the following result that was introduced in \cite{book}   and proved in \cite[Theorem 12]{HSz5}.

\begin{thm}[\cite{book}, \cite{HSz5}]\label{genpart}
 Let $\{S_1,\dots,S_n\}$ be a partition of a  set $S$, $\alpha_i, \beta_i\in \mathbb Z_+$ for all $i\in\{1,\dots,n\}$ and $\mu\in \mathbb Z_+$.  Let 
\begin{eqnarray}
  \mathcal{B}	&	=	&	\{Z \subseteq S:Z\cap S_i\in \mathcal{I}_i, \alpha_i\leq |Z\cap S_i|\leq \beta_i \text{ for }i=1,\ldots,n,  |Z|=\mu\},\\
  	r(Z)		&	=	&	\min\{\sum_{i=1}^n \min \{\beta_i,|Z\cap S_i|\},\mu-\sum_{i=1}^n \max\{\alpha_i-|Z\cap S_i|,0\}\}
						\text{ for every } Z\subseteq S.\label{rfgpm}
\end{eqnarray}
There exists  a matroid whose  set of bases is $\mathcal{B}$ and  rank function is $r$ if and only if 
\begin{eqnarray}
	&&\hskip .6truecm\alpha_i\hskip .63truecm \leq \hskip .6truecm\min\{\beta_i, |S_i|\} \ \text{ for all } i\in\{1,\ldots,n\},			\label{genpartcond1}\\
	&&\sum_{i=1}^n \alpha_i \leq \mu \leq \sum_{i=1}^n \min\{\beta_i,|S_i|\}. \label{genpartcond2}
\end{eqnarray}
This matroid is called {\rm generalized partition matroid}.
\end{thm}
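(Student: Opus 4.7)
The plan is to verify that $\mathcal{B}$ is the basis family of a matroid and that this matroid has rank function $r$. Necessity is immediate: if $\mathcal{B}\ne\emptyset$, any $Z\in\mathcal{B}$ satisfies $\alpha_i\le|Z\cap S_i|\le\min\{\beta_i,|S_i|\}$, yielding \eqref{genpartcond1}, and summing over $i$ with $|Z|=\mu$ yields \eqref{genpartcond2}.

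For sufficiency, I would first exhibit an element of $\mathcal{B}$: \eqref{genpartcond1} lets us pick an $\alpha_i$-subset of $S_i$ in $\mathcal{I}_i$ for each $i$, and the slack in \eqref{genpartcond2} allows extension within each block up to total size $\mu$ (staying within $\beta_i$ and $\mathcal{I}_i$). Next, I would verify the basis exchange axiom. Given $B_1,B_2\in\mathcal{B}$ and $x\in(B_1\setminus B_2)\cap S_i$, I split on whether $|B_1\cap S_i|>\alpha_i$ or $|B_1\cap S_i|=\alpha_i$. In the first case, $x$ can be deleted without violating the lower bound on $S_i$; since $|B_1-x|<|B_2|$, there is some block $S_j$ with $|(B_1-x)\cap S_j|<|B_2\cap S_j|\le\beta_j$, and the exchange axiom of $\mathcal{I}_j$ supplies $y\in(B_2\setminus B_1)\cap S_j$ with $B_1-x+y\in\mathcal{B}$. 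In the second case, $y$ must lie in $S_i$ to restore the lower bound, and the inequalities $|B_2\cap S_i|\ge\alpha_i=|B_1\cap S_i|$ together with $x\in(B_1\setminus B_2)\cap S_i$ force $|(B_2\setminus B_1)\cap S_i|\ge 1$, so $\mathcal{I}_i$-exchange delivers the desired $y$.

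For the rank function, I would use the identity $r(Z)=\max\{|I|:I\subseteq Z,\,I\text{ independent in the matroid}\}$. An $I\subseteq Z$ with $|I\cap S_i|\le\min\{\beta_i,|Z\cap S_i|\}$ and $I\cap S_i\in\mathcal{I}_i$ extends to some $B\in\mathcal{B}$ iff $|I|+\sum_i\max\{\alpha_i-|I\cap S_i|,0\}\le\mu$, that is, after accounting for the elements outside $I$ needed to fill each $\alpha_i$-quota, the total can still be padded up to $\mu$ (the upper bound $\mu\le\sum_i\min\{\beta_i,|S_i|\}$ from \eqref{genpartcond2} guarantees that enough room exists outside $I$). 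Maximizing $|I|$ subject to these constraints gives exactly the two bounds inside the $\min$ in \eqref{rfgpm}, and a matching lower bound is produced by the greedy construction that first places $\min\{\alpha_i,|Z\cap S_i|\}$ elements inside each block of $Z$ and then fills remaining slots up to $\min\{\beta_i,|Z\cap S_i|\}$.

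The main obstacle is the interaction between the $\alpha_i$ lower bounds and the global cardinality $\mu$: when several $\alpha_i$'s are tight simultaneously, the exchange may be forced to stay inside a single block, and the rank analysis must account for mandatory elements drawn from outside $Z$. This is precisely what produces the second term $\mu-\sum_i\max\{\alpha_i-|Z\cap S_i|,0\}$ in the $\min$ of \eqref{rfgpm} and distinguishes this matroid from the direct sum of truncated uniform matroids.
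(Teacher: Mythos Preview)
The paper does not supply a proof of this theorem; it is quoted from \cite{book} and \cite{HSz5} and used as a black box, so there is no in-paper argument to compare against. Your sketch is sound: the case split on whether $|B_1\cap S_i|>\alpha_i$ or $|B_1\cap S_i|=\alpha_i$ correctly handles basis exchange (in the second case $|B_2\cap S_i|\ge\alpha_i=|B_1\cap S_i|$ together with $x\in(B_1\setminus B_2)\cap S_i$ indeed forces $(B_2\setminus B_1)\cap S_i\ne\emptyset$), and your characterization of independence via $|I\cap S_i|\le\beta_i$ and $|I|+\sum_i\max\{\alpha_i-|I\cap S_i|,0\}\le\mu$ yields the two terms of \eqref{rfgpm} after the greedy fill you describe.

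One minor remark: the symbols $\mathcal{I}_i$ in the statement are never defined in the paper, and the rank formula \eqref{rfgpm} depends only on the cardinalities $|Z\cap S_i|$; in the paper's sole application each $\mathcal{I}_i$ is the free matroid on $S_i$, so your appeals to ``$\mathcal{I}_j$-exchange'' are correct but amount simply to choosing any element of $(B_2\setminus B_1)\cap S_j$.
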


We also need the matroid intersection theorem of Edmonds \cite{e3}.

\begin{thm} [Edmonds \cite{e3}]\label{matroidintersection}
Let ${\sf M}_1=(S,r_1)$ and ${\sf M}_2=(S,r_2)$ be two matroids on  $S$, and $\mu\in\mathbb{Z}_+$.  A common independent set of size $\mu$  of ${\sf M}_1$ and ${\sf M}_2$ exists  if and only if 
\begin{eqnarray} 
	r_1(Z)+r_2(S\setminus Z)\geq \mu \ \ \ \text{ for all } Z \subseteq S.\label{edmmatint}
\end{eqnarray}
\end{thm}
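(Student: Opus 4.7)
I plan to establish the stronger max-min equality
\[\max\{|I|:I\in\mathcal{I}_{{\sf M}_1}\cap\mathcal{I}_{{\sf M}_2}\}=\min\{r_1(Z)+r_2(S\setminus Z):Z\subseteq S\},\]
from which the statement follows at once, since a common independent set of size $\mu$ exists if and only if $\mu$ does not exceed this maximum, equivalently \eqref{edmmatint} holds for all $Z$. The direction ``$\le$'' is immediate: for any common independent set $I$ and any $Z\subseteq S$, the partition $I=(I\cap Z)\cup(I\cap(S\setminus Z))$ yields $|I|\le r_1(Z)+r_2(S\setminus Z)$. The work is to prove the opposite inequality, for which I would use the classical augmenting-path argument in an exchange graph.

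\textbf{Exchange graph and augmentation.} Given a common independent set $I$, build the directed exchange graph $D(I)$ on vertex set $S$ with arcs defined as follows: for each $x\in I$ and $y\in S\setminus I$, insert $x\to y$ if $(I\setminus\{x\})\cup\{y\}\in\mathcal{I}_{{\sf M}_2}$, and insert $y\to x$ if $(I\setminus\{x\})\cup\{y\}\in\mathcal{I}_{{\sf M}_1}$. Let
\[X_1:=\{y\in S\setminus I:I+y\in\mathcal{I}_{{\sf M}_1}\}, \qquad X_2:=\{y\in S\setminus I:I+y\in\mathcal{I}_{{\sf M}_2}\}.\]
If $X_1\cap X_2\neq\emptyset$, add a common element to $I$ and iterate. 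Otherwise, if a directed path from $X_1$ to $X_2$ exists in $D(I)$, take a shortest such path $P$ and replace $I$ by $I':=I\triangle V(P)$.

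\textbf{Key lemma and certificate.} The central technical step, and the main obstacle, is to verify that after augmenting along a shortest $X_1$-to-$X_2$ path $P$, the new set $I'$ remains independent in both ${\sf M}_1$ and ${\sf M}_2$, with $|I'|=|I|+1$. This is the Krogdahl--Brualdi lemma, whose proof exploits the unique-circuit axiom of a matroid together with the fact that a shortest path in $D(I)$ admits no chord; these ingredients ensure that the element swaps prescribed by $P$ compose correctly in each matroid separately. Once no augmenting path remains, let $U$ be the set of vertices from which $X_2$ is reachable in $D(I)$, together with $X_2$ itself, so that $X_1\cap U=\emptyset$. Setting $Z:=S\setminus U$, the absence of arcs leaving $U$, together with $X_1\subseteq Z$, forces $r_1(Z)=|I\cap Z|$ and $r_2(U)=|I\cap U|$ by standard matroid exchange arguments, whence $r_1(Z)+r_2(S\setminus Z)=|I|$. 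Thus the current $I$ attains the minimum on the right-hand side, which completes the proof of the max-min equality and hence of the theorem.
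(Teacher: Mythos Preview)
The paper does not give its own proof of this theorem: it is quoted as Edmonds' classical matroid intersection theorem \cite{e3} and used as a black box in the proof of Theorem~\ref{hvkgcvkl}. There is therefore nothing in the paper to compare your argument against.

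That said, your outline is the standard augmenting-path proof and is essentially correct. A couple of points worth tightening if you want a self-contained proof rather than a sketch: (i) the Krogdahl--Brualdi step, which you rightly flag as the crux, deserves a full argument (shortest path $\Rightarrow$ no chords $\Rightarrow$ the symmetric difference stays independent in each matroid); and (ii) the certificate step should be argued a bit more carefully. With $U$ the set of vertices that can reach $X_2$, no arc enters $U$ from $Z=S\setminus U$. From the absence of $M_1$-arcs $y\to x$ with $y\in Z\setminus I$, $x\in I\cap U$, together with $X_1\subseteq Z$, one deduces that $I\cap Z$ spans $Z$ in ${\sf M}_1$, so $r_1(Z)=|I\cap Z|$; from the absence of $M_2$-arcs $x\to y$ with $x\in I\cap Z$, $y\in U\setminus I$, together with $X_2\subseteq U$, one deduces that $I\cap U$ spans $U$ in ${\sf M}_2$, so $r_2(U)=|I\cap U|$. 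Your paragraph gestures at this but does not make the two halves explicit.
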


We are now able to present and prove an extension of Theorem \ref{thmKT}.

\begin{thm}\label{hvkgcvkl}
Let $G=(V,E)$ be a graph, $S$ a multiset of vertices in $V$, $k\in\mathbb Z_+,$ $f,g: V\rightarrow \mathbb Z_+$ functions, and ${\sf M}=(S,r_{\sf M})$ a matroid. There exists an {\sf M}-based $(f,g)$-bounded packing of $k$ rooted trees  in $G$ if and only if  
\begin{eqnarray} 
 		f(v)	&	\le	&	\hskip .65truecm\min\{r_{\sf M}(S_v),g(v)\}\hskip 4.2truecm\text{ for every } v\in V,\label{newfgrm}\\
		k	&	\le	&	\sum_{v\in V}\min\{r_{\sf M}(S_v),g(v)\},\label{oygvouiph}\\
e_E({\cal P})	&	\ge	&	\sum_{X\in {\cal P}} \max\{r_{{\sf M}}(S)-r_{{\sf M}}(S_Y)-g(X\setminus Y):Y\subseteq X\} 
						\hskip .24truecm\text{ for every partition } {\cal P} \text{ of }  V,\label{rfrrgrgrt}\\
e_E({\cal P})+k	&	\ge	&	\sum_{X\in {\cal P}} \max\{r_{{\sf M}}(S)-r_{{\sf M}}(S_Y)+f(Y):Y\subseteq X\} 
						\hskip .9truecm\text{ for every partition } {\cal P} \text{ of }  V.\label{rfrrgrgrt2}\hskip .6truecm
\end{eqnarray}
\end{thm}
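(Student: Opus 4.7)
The plan is to encode the problem as a matroid intersection on the ground set $E\cup S$. By Theorem~\ref{thmKTimplicitgen}(b), a pair $(F,T)\in 2^E\times 2^S$ arises as the edge set and root multiset of an ${\sf M}$-based packing of rooted trees in $G$ if and only if $F\cup T$ is a basis of ${\sf M}'_{KT}$ (that is, independent and of size $r_{{\sf M}}(S)|V|$). To simultaneously encode the $(f,g)$-bound and the requirement of exactly $k$ trees, define ${\sf M}_2$ on $E\cup S$ to be the generalized partition matroid of Theorem~\ref{genpart} with blocks $E$ and $S_v$ for each $v\in V$, parameters $\alpha_E=\beta_E=r_{{\sf M}}(S)|V|-k$, $\alpha_v=f(v)$, $\beta_v=g(v)$, and total size $\mu=r_{{\sf M}}(S)|V|$. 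Its bases are precisely the sets $F\cup T$ with $|F|=r_{{\sf M}}(S)|V|-k$ (hence $|T|=k$) and $f(v)\le|T_v|\le g(v)$ for every $v$. Thus the packing sought exists if and only if ${\sf M}_1:={\sf M}'_{KT}$ and ${\sf M}_2$ share a common basis.

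I first verify that ${\sf M}_2$ is well-defined under the stated conditions. Theorem~\ref{genpart} requires $f(v)\le\min\{g(v),|S_v|\}$, $r_{{\sf M}}(S)|V|-k\le|E|$, $\sum_{v}f(v)\le k$, and $k\le\sum_{v}\min\{g(v),|S_v|\}$. The first and last are immediate from \eqref{newfgrm} and \eqref{oygvouiph} via $r_{{\sf M}}(S_v)\le|S_v|$. The third follows from \eqref{rfrrgrgrt2} applied at ${\cal P}=\{V\}$ with $Y=V$. The fourth follows from \eqref{rfrrgrgrt2} applied at the partition of singletons with each $Y_v=\{v\}$, after using \eqref{newfgrm} to evaluate the max.

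Necessity of the four stated conditions is largely direct. Conditions \eqref{newfgrm} and \eqref{oygvouiph} follow from the matroid constraint that, for every $v\in V$, the set of roots of packing trees containing $v$ is a basis of ${\sf M}$, whence its restriction to the copies of $v$ is independent in ${\sf M}|_{S_v}$; this forces $|T_v|\le r_{\sf M}(S_v)$ and combines with $|T_v|\le g(v)$. For necessity of \eqref{rfrrgrgrt} and \eqref{rfrrgrgrt2}, I will substitute specific sets $Z=F^*\cup(\bigcup_{X\in{\cal P}}S_{Y_X})$ into Edmonds' rank inequality for the common basis $F^*\cup T^*$, with $Y_X\subseteq X$ the witness chosen per $X$; evaluating $r'_{KT}$ at the given partition ${\cal P}$ and $r_2$ at either of its two min-terms then yields exactly \eqref{rfrrgrgrt} and \eqref{rfrrgrgrt2}.

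The heart of the proof, and the main obstacle, is sufficiency via Theorem~\ref{matroidintersection}, which requires $r'_{KT}(Z)+r_2((E\cup S)\setminus Z)\ge r_{{\sf M}}(S)|V|$ for every $Z=F\cup T$. Writing $r_2$ as the minimum of its two defining expressions in \eqref{rfgpm}, this splits into two inequalities. Substituting the partition formula \eqref{ZHfunction} for $r'_{KT}$ and choosing ${\cal P}$ a minimizing partition for $F\cup T$, the first inequality (involving $\sum_{v}\min\{g(v),|S_v\setminus T_v|\}$) reduces after adversarial optimization in $(F,T)$ to \eqref{rfrrgrgrt}, where the optimal $Y_X\subseteq X$ records the vertices of $X$ at which the $g$-cap is active. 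The second inequality (involving $\sum_{v}\max\{f(v)-|S_v\setminus T_v|,0\}$ and the $k$ offset) reduces analogously to \eqref{rfrrgrgrt2}, with $Y_X$ this time recording where the $f$-floor is active. The delicate point is aligning the two levels of extremization---the inner minimum over ${\cal P}$ in $r'_{KT}$ and the inner maximum over $Y\subseteq X$ in \eqref{rfrrgrgrt}--\eqref{rfrrgrgrt2}---so that the adversarial choice of $(F,T)$ corresponds to a choice of $Y_X$ that witnesses the max in the stated partition conditions.
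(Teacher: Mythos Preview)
Your approach---intersection of ${\sf M}'_{KT}$ with a generalized partition matroid encoding the $(f,g)$-bounds and the tree count $k$---is exactly the paper's, and your ${\sf M}_2$ (with $E$ as one block with $\alpha_E=\beta_E=r_{\sf M}(S)|V|-k$) has the same rank function as the paper's direct-sum formulation.

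Two substantive steps are missing from your sufficiency sketch. First, before setting up the intersection the paper replaces $S$ by a maximal $S^*\subseteq S$ with $S^*_v\in\mathcal{I}_{\sf M}$ for every $v$; this does not change any of \eqref{newfgrm}--\eqref{rfrrgrgrt2} (since $r_{\sf M}(S^*_X)=r_{\sf M}(S_X)$ for all $X$), but it makes every $T\subseteq S$ independent in ${\sf M}'_{KT}$ via Claim~\ref{kiytckhjfvlbnk}, so that $r'_{KT}(T)=|T|$. This identity is what drives the rank computation when $F$ has been reduced to $\emptyset$, and you do not have it without the reduction.

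Second, your claim that the two min-terms of $r_2$ reduce Edmonds' inequality to \eqref{rfrrgrgrt} and \eqref{rfrrgrgrt2} is only half the story. The $E$-contribution to $r_2(\overline F\cup\overline T)$ is itself $\min\{|\overline F|,\, r_{\sf M}(S)|V|-k\}$, and the two values of this inner minimum force a further case split. When $|\overline F|\ge r_{\sf M}(S)|V|-k$ one may take $F=\emptyset$; then the two outer min-terms reduce the rank inequality not to \eqref{rfrrgrgrt}--\eqref{rfrrgrgrt2} but to \eqref{oygvouiph} and \eqref{newfgrm} respectively---so these two conditions are genuinely used in the Edmonds verification, not merely to guarantee that ${\sf M}_2$ exists. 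Only when $|\overline F|<r_{\sf M}(S)|V|-k$, so that one may take $F=E$, do the partition conditions \eqref{rfrrgrgrt} and \eqref{rfrrgrgrt2} enter in the way you describe, with $Y_X$ recording where the $g$-cap or $f$-floor is active. Your sketch handles only this second regime.
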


\begin{proof}
To prove the {\bf necessity}, let {\boldmath${\cal B}$} be an {\sf M}-based $(f,g)$-bounded packing of $k$ rooted  trees with root set $\bm{T}$. Since $r_{\sf M}$ is non-decreasing and ${\cal B}$ is {\sf M}-based, $r_{\sf M}(S_v)\ge r_{\sf M}(T_v)=|T_v|$ for every $v\in V.$ Then, since ${\cal B}$ is $(f,g)$-bounded, we have $\min\{r_{\sf M}(S_v),g(v)\}\ge |T_v|\ge f(v),$ so \eqref{newfgrm} holds. Further, we get $\sum_{v\in V}\min\{r_{\sf M}(S_v),g(v)\}\ge \sum_{v\in V}|T_v|=|T|=k,$ so \eqref{oygvouiph} holds.
Moreover, since ${\cal B}$ is {\sf M}-based, we get $r_{{\sf M}}(S)=r_{{\sf M}}(T).$ So,  by Theorem \ref{thmKT} applied for $T$, the submodularity and the subcardinality  of $r_{{\sf M}},$  we have 
\begin{eqnarray}\label{ouvjbkn}
	e_E({\cal P})	\ge		\sum_{X\in {\cal P}}          (r_{{\sf M}}(S)-r_{{\sf M}}(T_X))
				\ge		\sum_{X\in {\cal P}} \max\{r_{{\sf M}}(S)-r_{{\sf M}}(T_Y)-|T_{X\setminus Y}|:Y\subseteq X\}.
\end{eqnarray}
By \eqref{ouvjbkn}, the monotonicity of $r_{{\sf M}}$  and $g(v)\ge |T_v|$ for every $v\in V,$ we get that \eqref{rfrrgrgrt} holds.
By $|T_v|\ge f(v)$ for every $v\in V,$ we have $|T_{X\setminus Y}|\le |T_X|-f(Y)$ for every $Y\subseteq X\subseteq V.$ So, by \eqref{ouvjbkn}, the monotonicity of $r_{{\sf M}}$
 and $\sum_{X\in {\cal P}}|T_X|=|T|=k$,   \eqref{rfrrgrgrt2} holds.
\medskip

To prove the {\bf sufficiency}, let us suppose that \eqref{newfgrm}--\eqref{rfrrgrgrt2} hold. We may suppose that \eqref{matcondori1} holds. Indeed, by taking a maximal $S^*\subseteq S$ such that $S^*_v\in \mathcal{I}_{\sf M}$ for every $v\in V,$ we have $r_{\sf M}(S^*_X)=r_{\sf M}(S_X)$ for every $X\subseteq V,$ so \eqref{newfgrm}--\eqref{rfrrgrgrt2} still hold. From now on we suppose that \eqref{matcondori1} holds.
We formulate our problem as the intersection of two matroids. The first matroid is ${\sf M}'_{KT}$ with rank function $r'_{KT}$ given in \eqref{ZHfunction}. The second matroid ${\sf M}_2$ is the direct sum of the uniform matroid on ground set $E$ of rank $r_{{\sf M}}(S)|V|-k$ and the generalized partition matroid on  ground set $S$   with the partition $\{S_v\}_{v\in V}$ and the bounds $(\alpha_v,\beta_v)=(f(v),g(v))$ and $\mu=k.$ The rank function of ${\sf M}_2$ satisfies, by \eqref{rfgpm}, for all $F\subseteq E, T\subseteq S,$
\begin{eqnarray}
r_2(F\cup T)=\min\{|F|,r_{{\sf M}}(S)|V|-k\}+\min\{\sum_{v\in V} \min \{g(v),|T_v|\},k-\sum_{v\in V} \max\{f(v)-|T_v|,0\}\}.
\end{eqnarray}
Note that, by Theorem \ref{genpart} and $r_{{\sf M}}(S_v)=|S_v|$ for every $v\in V$ (by \eqref{matcondori1}),  the generalized partition matroid exists if and only if \eqref{newfgrm} and \eqref{oygvouiph} hold and $f(V)\le k$ (which holds by \eqref{rfrrgrgrt2}).

\begin{cl}\label{ouyvfouefh}
There exists an {\sf M}-based $(f,g)$-bounded packing of $k$ rooted trees with  edge set $F$ and root set $T$ if and only if $F\cup T$ is a common independent set of ${\sf M}'_{KT}$ and ${\sf M}_2$ of size $r_{{\sf M}}(S)|V|.$
\end{cl}

\begin{proof}
To prove the {\bf necessity}, let {\boldmath${\cal B}$} be an {\sf M}-based $(f,g)$-bounded packing of $k$ rooted trees with root set $T$. By Theorem \ref{thmKTimplicitgen}, $F\cup T$ is an  independent set of ${\sf M}'_{KT}$ of size $r_{{\sf M}}(S)|V|.$ Since ${\cal B}$ is {\sf M}-based, $T_v$ is independent in {\sf M} for every  $v\in V.$  Since ${\cal B}$ is an $(f,g)$-bounded packing of $k$ rooted trees, we have $f(v)\le  |T_v|\le g(v)$ and $|T|=k.$ Then $|F|=r_{{\sf M}}(S)|V|-k.$ It follows that $F\cup T$ is an independent set of  ${\sf M}_2$, and we are done.

To prove the {\bf sufficiency}, let us suppose that $F\cup T$ is a common independent set of ${\sf M}'_{KT}$ and ${\sf M}_2$ of size $r_{{\sf M}}(S)|V|.$ By Theorem \ref{thmKTimplicitgen}, there exists an {\sf M}-based packing ${\cal B}$ of rooted trees with edge set $F$ and root set $T$. Since $F\cup T$ is independent in ${\sf M}_2$ of size $r_{{\sf M}}(S)|V|,$ we have $|T|=k$ and $f(v)\le  |T_v|\le g(v)$ for every  $v\in V.$  Hence ${\cal B}$ is an $(f,g)$-bounded packing of $k$ rooted trees, and we are done.
\end{proof}

By Claim \ref{ouyvfouefh} and Theorem \ref{matroidintersection}, there exists an {\sf M}-based $(f,g)$-bounded packing of $k$ rooted trees if and only if 
\begin{eqnarray}\label{obdozibdoiz}
	\min\{r'_{KT}(F\cup T)+r_2(\overline F\cup\overline T):F\subseteq E, T\subseteq S\}\ge r_{{\sf M}}(S)|V|.
\end{eqnarray}
Let $F\subseteq E$ and $T\subseteq S$ attain the minimum.
\medskip

\noindent {\bf Case 1.} If $|\overline F|\ge r_{{\sf M}}(S)|V|-k.$ Then, $r_2(E\cup\overline T)=r_2(\overline F\cup\overline T).$ Hence, since  $r'_{KT}$ is non-decreasing, we have $r'_{KT}(T)+r_2(E\cup\overline T)\le r'_{KT}(F\cup T)+r_2(\overline F\cup\overline T)$, so we may suppose that $F=\emptyset.$
 By Claim \ref{kiytckhjfvlbnk}, we have 
\begin{eqnarray}
r'_{KT}(T)	&=&	|T|\ \ =\ \ \sum_{v\in V}|T_v|. \label{hjlbknkjhg}
\end{eqnarray}

\noindent {\bf Case (a)} If $r_2(E\cup\overline T)=r_{{\sf M}}(S)|V|-k+\sum_{v\in V} \min \{g(v),|\overline T_v|\}$, then, by \eqref{hjlbknkjhg}, \eqref{oygvouiph}, and \eqref{matcondori1}, we have 
\begin{eqnarray*}
r'_{KT}(T)+r_2(E\cup\overline T)-r_{{\sf M}}(S)|V|&=&\sum_{v\in V}|T_v|-k+\sum_{v\in V} \min \{g(v),|\overline T_v|\}\\
&\ge & -k+\sum_{v\in V} \min \{g(v),|S_v|\}\ \ \ge\ \ 0.
\end{eqnarray*}

\noindent {\bf Case (b)} If $r_2(E\cup\overline T)=r_{{\sf M}}(S)|V|-k+k-\sum_{v\in V} \max\{f(v)-|\overline T_v|,0\}$, then, by \eqref{hjlbknkjhg},  \eqref{newfgrm},  and \eqref{matcondori1}, we have 
\begin{eqnarray*}
r'_{KT}(T)+r_2(E\cup\overline T)-r_{{\sf M}}(S)|V|	&=&	     \sum_{v\in V}(|T_v|-\max\{f(v)-|\overline T_v|,0\})\\
										&=&   \sum_{v\in V}\min\{|T_v|,|S_v|-f(v)\}\ \ \ge\ \ 0.
\end{eqnarray*}

\noindent {\bf Case 2.} If $|\overline F|<r_{{\sf M}}(S)|V|-k.$ Then, $r_2(\overline T)=r_2(\overline F\cup\overline T)-|\overline F|.$ Hence, by the submodularity and the subcardinality of $r_{{\sf M}}$, we have $r'_{KT}(E\cup T)+r_2(\overline T)\le r'_{KT}(F\cup T)+|\overline F|+r_2(\overline F\cup\overline T)-|\overline F|$, so we may suppose that $F=E.$
\medskip

Note that $r'_{KT}(E\cup T)=r_{{\sf M}}(S)|V|+\min\{e_E(\mathcal{P})-\sum_{X\in {\cal P}} (r_{{\sf M}}(S)-r_{{\sf M}}(T_X)):\mathcal{P} \text{ partition of } V\}.$
\medskip

\noindent {\bf Case (a)} If $r_2(\overline T)=\sum_{v\in V} \min \{g(v),|\overline T_v|\}$. By  the modularity of $g$,  \eqref{matcondori1},  the monotonicity  and submodularity of $r_{{\sf M}}$, for every $X\subseteq V,$ there exists $\bm{Y_X}\subseteq X$ such that we have  
\begin{eqnarray}
r_{{\sf M}}(T_X)+\sum_{v\in X} \min \{g(v),|\overline T_v|\}&=&r_{{\sf M}}(T_X)+\sum_{v\in X\setminus Y_X}g(v)+\sum_{v\in Y_X}r_{{\sf M}}(\overline T_v)\label{yhjvbhvydtx}\\
&\ge& r_{{\sf M}}(T_{Y_X})+g(X\setminus Y_X)+r_{{\sf M}}(\overline T_{Y_X})\ \ \ge \ \ r_{{\sf M}}(S_{Y_X})+g(X\setminus Y_X).\nonumber
\end{eqnarray}
 Then, by \eqref{yhjvbhvydtx} and \eqref{rfrrgrgrt}, we have  
\begin{eqnarray*}
&&r'_{KT}(E\cup T)+r_2(\overline T)-r_{{\sf M}}(S)|V|\\
&=&\min\{e_E(\mathcal{P})+\sum_{X\in {\cal P}} (r_{{\sf M}}(T_X)-r_{{\sf M}}(S)+\sum_{v\in X} \min \{g(v),|\overline T_v|\}):\mathcal{P} \text{ partition of } V\}\\
&\ge& \min\{e_E(\mathcal{P})+\sum_{X\in {\cal P}} (r_{{\sf M}}(S_{Y_X})-r_{{\sf M}}(S)+g(X\setminus {Y_X})):\mathcal{P} \text{ partition of } V\}\ \ \ge \ \ 0.
\end{eqnarray*}

\noindent {\bf Case (b)} If $r_2(\overline T)=k-\sum_{v\in V} \max\{f(v)-|\overline T_v|,0\}.$ By  the modularity of $f$,  \eqref{matcondori1},  the monotonicity  and submodularity of $r_{{\sf M}}$, for every $X\subseteq V,$ there exists $\bm{Y_X}\subseteq X$ such that we have    
\begin{eqnarray}
r_{{\sf M}}(T_X)-\sum_{v\in X} \max\{f(v)-|\overline T_v|,0\}&=&r_{{\sf M}}(T_X)+\sum_{v\in Y_X}(r_{{\sf M}}(\overline T_v)-f(v))\nonumber\\
&\ge& r_{{\sf M}}(T_{Y_X})+r_{{\sf M}}(\overline T_{Y_X})-f(Y_X)\ \ \ge \ \  r_{{\sf M}}(S_{Y_X})-f(Y_X).
\hskip .6truecm\label{oiucudgfghj}
\end{eqnarray}
 Then, by \eqref{oiucudgfghj} and \eqref{rfrrgrgrt2}, we have  
 \begin{eqnarray*}
&&r'_{KT}(E\cup T)+r_2(\overline T)-r_{{\sf M}}(S)|V|\\
&=&\min\{e_E(\mathcal{P})+\sum_{X\in {\cal P}} (r_{{\sf M}}(T_X)-r_{{\sf M}}(S)-\sum_{v\in X} \max\{f(v)-|\overline T_v|,0\})+k:\mathcal{P} \text{ partition of } V\}\\
&\ge& \min\{e_E(\mathcal{P})+k +\sum_{X\in {\cal P}} (r_{{\sf M}}(S_{Y_X})-r_{{\sf M}}(S)-f(Y_X)):\mathcal{P} \text{ partition of } V\}\ \ \ge\ \ 0.
\end{eqnarray*}

It follows that in every case \eqref{obdozibdoiz} holds, and hence the required packing exists.
\end{proof}

If $f(v)=0$ and $g(v)=\infty$ for every $v\in V$ and $k=|S|,$ then Theorem \ref{hvkgcvkl} reduces to Theorem \ref{thmKT}.

\subsection{Matroid-based $(f,g)$-bounded $(\alpha,\beta)$-limited packing of  rooted trees}

Theorem \ref{hvkgcvkl} can easily be extended to packings where the number of rooted trees is not given but is lower   and upper bounded.

\begin{thm}\label{hvkgcvkl2}
Let $G=(V,E)$ be a graph, $S$ a multiset of vertices in $V$, $\alpha,\beta\in\mathbb Z_+,$ $f,g: V\rightarrow \mathbb Z_+$ functions, and ${\sf M}=(S,r_{\sf M})$ a matroid. There exists an {\sf M}-based $(f,g)$-bounded $(\alpha,\beta)$-limited packing of rooted trees  in $G$ if and only if  \eqref{newfgrm} and \eqref{rfrrgrgrt} hold and
\begin{eqnarray} 
			\alpha	&	\le	&	\beta,									\label{jbcdjhcvduvcy}\\
			\alpha	&	\le	&	\sum_{v\in V}\min\{r_{\sf M}(S_v),g(v)\},			\label{oygvouiphnew}\\
	e_E({\cal P})+\beta	&	\ge	&	\sum_{X\in {\cal P}} \max\{r_{{\sf M}}(S)-r_{{\sf M}}(S_Y)+f(Y):Y\subseteq X\} 
						\hskip .2truecm\text{ for every partition } {\cal P} \text{ of }  V.	\label{rfrrgrgrt2new}
\end{eqnarray}
\end{thm}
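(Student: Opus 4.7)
The plan is to reduce Theorem \ref{hvkgcvkl2} to Theorem \ref{hvkgcvkl} by choosing a specific value $k \in [\alpha,\beta]$ for which the hypotheses of the latter hold. For the necessity, given an $(\alpha,\beta)$-limited packing with $k$ rooted trees, I would note that $\alpha \le k \le \beta$ yields \eqref{jbcdjhcvduvcy}, and apply the necessity part of Theorem \ref{hvkgcvkl} to this packing; conditions \eqref{newfgrm} and \eqref{rfrrgrgrt} transfer verbatim, \eqref{oygvouiph} weakens to \eqref{oygvouiphnew} via $\alpha \le k$, and \eqref{rfrrgrgrt2} weakens to \eqref{rfrrgrgrt2new} via $k \le \beta$.

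For the sufficiency, abbreviate $A := \sum_{v \in V} \min\{r_{\sf M}(S_v), g(v)\}$ and, for each partition $\mathcal{P}$ of $V$,
\[
B(\mathcal{P}) := \sum_{X \in \mathcal{P}} \max_{Y \subseteq X}\{r_{\sf M}(S) - r_{\sf M}(S_Y) + f(Y)\} - e_E(\mathcal{P}), \qquad B := \max_{\mathcal{P}} B(\mathcal{P}),
\]
so that the four hypotheses of Theorem \ref{hvkgcvkl} for a candidate value $k$ become \eqref{newfgrm}, \eqref{rfrrgrgrt}, $k \le A$, and $k \ge B$. I would set $k := \max(\alpha, B)$. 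By \eqref{jbcdjhcvduvcy} and \eqref{rfrrgrgrt2new}, both $\alpha$ and $B$ are at most $\beta$, so $k \le \beta$; and $\alpha \le k$ by construction. For $k \le A$, \eqref{oygvouiphnew} gives $\alpha \le A$, so it remains to establish $B \le A$, and this is the main obstacle.

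To prove $B \le A$, fix $\mathcal{P}$ and $Y_X^* \subseteq X$ achieving the inner maxima, and split $V = V^r \cup V^g$ with $V^r := \{v \in V : r_{\sf M}(S_v) \le g(v)\}$, so that $\min\{r_{\sf M}(S_v), g(v)\}$ equals $r_{\sf M}(S_v)$ on $V^r$ and $g(v)$ on $V^g$. My plan is to apply \eqref{rfrrgrgrt} to $\mathcal{P}$ with the choice $Y_X' := Y_X^* \cup (X \cap V^r) \subseteq X$, which yields
\[
B(\mathcal{P}) \le \sum_{X \in \mathcal{P}} \bigl[ f(Y_X^*) - r_{\sf M}(S_{Y_X^*}) + r_{\sf M}(S_{Y_X'}) + g(X \setminus Y_X') \bigr].
\]
Subadditivity of $r_{\sf M}$ on the disjoint decomposition $S_{Y_X'} = S_{Y_X^*} \cup S_{(X\cap V^r)\setminus Y_X^*}$ gives $r_{\sf M}(S_{Y_X'}) - r_{\sf M}(S_{Y_X^*}) \le \sum_{v \in (X \setminus Y_X^*)\cap V^r} r_{\sf M}(S_v)$, while by modularity of $g$ one has $g(X \setminus Y_X') = \sum_{v \in (X \setminus Y_X^*)\cap V^g} g(v)$. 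Since these two index sets partition $X \setminus Y_X^*$, and $r_{\sf M}(S_v), g(v)$ respectively equal $\min\{r_{\sf M}(S_v), g(v)\}$ on $V^r, V^g$, and $f(Y_X^*) \le \sum_{v \in Y_X^*}\min\{r_{\sf M}(S_v), g(v)\}$ by \eqref{newfgrm}, each bracket is at most $\sum_{v \in X}\min\{r_{\sf M}(S_v), g(v)\}$. Summing over $X$ yields $B(\mathcal{P}) \le A$, hence $B \le A$.

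With $B \le A$, the chosen $k = \max(\alpha, B)$ lies in $[\alpha, \beta]$ and satisfies all four hypotheses of Theorem \ref{hvkgcvkl}, which then produces an ${\sf M}$-based $(f,g)$-bounded packing of exactly $k$ rooted trees in $G$; since $\alpha \le k \le \beta$, this packing is automatically $(\alpha,\beta)$-limited, completing the argument.
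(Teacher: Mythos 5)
Your proof is correct and follows essentially the same route as the paper: reduce to Theorem \ref{hvkgcvkl} by choosing an admissible $k\in[\alpha,\beta]$, observe that the only nontrivial compatibility to check is $B\le A$, and establish it via \eqref{rfrrgrgrt} with the set $Y_X^*\cup(X\cap V^r)$ together with subadditivity of $r_{\sf M}$, modularity of $f,g$, and \eqref{newfgrm}. The paper phrases the last step with two auxiliary sets $Y_X,Y_X'$ per block and a submodularity inequality, but this is just a notational variant of your $V^r$-splitting and subadditivity argument.
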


\begin{proof}
To prove the {\bf necessity}, let {\boldmath${\cal B}$} be an {\sf M}-based $(f,g)$-bounded $(\alpha,\beta)$-limited packing of rooted trees with root set $\bm{T}$. Let $k=|T|.$ Since ${\cal B}$ is $(\alpha,\beta)$-limited, we have $\alpha\le k\le\beta.$ Hence, \eqref{jbcdjhcvduvcy} holds. Further, by Theorem \ref{hvkgcvkl}, we get that \eqref{newfgrm}, \eqref{oygvouiph} (and hence \eqref{oygvouiphnew}),  \eqref{rfrrgrgrt} and \eqref{rfrrgrgrt2}  (and hence \eqref{rfrrgrgrt2new}) hold.
\medskip

To prove the {\bf sufficiency}, let us suppose that  \eqref{newfgrm}, \eqref{rfrrgrgrt}, \eqref{jbcdjhcvduvcy}, \eqref{oygvouiphnew}, and \eqref{rfrrgrgrt2new} hold. We show that there exists an integer $k$ that satisfies $\alpha\le k\le\beta,$ \eqref{oygvouiph} and \eqref{rfrrgrgrt2}. By \eqref{jbcdjhcvduvcy}, \eqref{oygvouiphnew}, and \eqref{rfrrgrgrt2new}, it is enough to prove that  for every partition  ${\cal P}$ of $V,$ we have 
\begin{eqnarray} \label{jkbhcd}
\sum_{v\in V}\min\{r_{\sf M}(S_v),g(v)\}&\ge &\sum_{X\in {\cal P}} \max\{r_{{\sf M}}(S)-r_{{\sf M}}(S_Y)+f(Y):Y\subseteq X\}-e_E({\cal P}).
\end{eqnarray}
 Let ${\cal P}$ be a partition of $V.$ For every $X\in {\cal P},$ let $\bm{Y_X, Y'_X}\subseteq X$ such that 
\begin{eqnarray} 
\sum_{v\in X}\min\{r_{\sf M}(S_v),g(v)\}				&	=	&	\sum_{v\in Y_X}r_{\sf M}(S_v)+g(X\setminus Y_X),\label{jigchjg}\\
\max\{r_{{\sf M}}(S)-r_{{\sf M}}(S_Y)+f(Y):Y\subseteq X\}	&	=	&	r_{{\sf M}}(S)-r_{{\sf M}}(S_{Y'_X})+f({Y'_X}). \label{jigchjg2}
\end{eqnarray}

Then, by \eqref{jigchjg}, \eqref{jigchjg2}, the submodularity of $r_{\sf M}$, the modularity of $g$ and $f$, \eqref{newfgrm} applied for $Y_X\cap Y'_X$ and for $Y'_X\setminus Y_X$, and \eqref{rfrrgrgrt}, we have 
\begin{eqnarray*} 
&&\sum_{v\in V}\min\{r_{\sf M}(S_v),g(v)\}-\sum_{X\in {\cal P}} \max\{r_{{\sf M}}(S)-r_{{\sf M}}(S_Y)+f(Y):Y\subseteq X\}\\ 
&=&\sum_{X\in {\cal P}}(\sum_{v\in Y_X}r_{\sf M}(S_v)+g(X\setminus Y_X)+r_{{\sf M}}(S_{Y'_X})-f({Y'_X})-r_{{\sf M}}(S))\\
&\ge&\sum_{X\in {\cal P}}(\sum_{v\in Y_X\cap Y'_X}r_{\sf M}(S_v)+r_{{\sf M}}(S_{Y_X\cup Y'_X})+g(X\setminus (Y_X\cup Y'_X))+g(Y'_X\setminus Y_X)\\
&&\hskip .8truecm -f(Y_X\cap Y'_X)-f(Y'_X\setminus Y_X)-r_{{\sf M}}(S))\\
&\ge&\sum_{X\in {\cal P}}(r_{{\sf M}}(S_{Y_X\cup Y'_X})+g(X\setminus (Y_X\cup Y'_X))-r_{{\sf M}}(S))\\
&\ge&\sum_{X\in {\cal P}}\min\{r_{{\sf M}}(S_Y)+g(X\setminus Y)-r_{{\sf M}}(S):Y\subseteq X\}\\
&=&-\sum_{X\in {\cal P}}\max\{r_{{\sf M}}(S)-r_{{\sf M}}(S_Y)-g(X\setminus Y):Y\subseteq X\}\ge -e_E({\cal P}),
\end{eqnarray*}
so \eqref{jkbhcd} holds. Then, by Theorem \ref{hvkgcvkl}, there exists an {\sf M}-based $(f,g)$-bounded packing of $k$ rooted trees  in $G$. Since $\alpha\le k\le\beta,$ the packing is $(\alpha,\beta)$-limited, and we are done.
\end{proof}

If $\alpha=\beta=k,$ then Theorem \ref{hvkgcvkl2} reduces to  Theorem \ref{hvkgcvkl}.

\subsection{Matroid-based $(f,g)$-bounded $(\alpha,\beta)$-limited packing of  rooted hypertrees}

Theorem \ref{tuttethmrem} was generalized to hypergraphs in  \cite{fkk}.

\begin{thm} [Frank, Kir\'aly, Kriesell \cite{fkk}]\label{tuttethmremhyp} 
Let $\mathcal{G}=(V,\mathcal{E})$ be a hypergraph and $k\in \mathbb{Z}_+.$
There exists a packing of $k$ spanning hypertrees in $\mathcal{G}$  if and only if  
	\begin{eqnarray} \label{tuttecondhyp} 
		e_{\mathcal{E}}({\cal P}) &\geq& k(|{\cal P}|-1)\hskip .5truecm  \text{ for every partition  ${\cal P}$ of  $V.$}
	\end{eqnarray}  
\end{thm}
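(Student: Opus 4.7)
The plan is to reduce the hypergraph theorem to the graph version (Theorem \ref{tuttethmrem}) by invoking the trimming lemma (Theorem \ref{trimminglemma}), which is precisely the kind of tool the authors developed in Section \ref{partitions} for such extensions. Necessity is routine: given a packing of $k$ spanning hypertrees $(V,\mathcal{F}_1),\dots,(V,\mathcal{F}_k)$ with pairwise disjoint hyperedge sets, each $\mathcal{F}_i$ satisfies $e_{\mathcal{F}_i}(\mathcal{P})\ge |\mathcal{P}|-1$ since any trimming of $\mathcal{F}_i$ is a spanning tree, and summing over $i$ together with disjointness yields $e_\mathcal{E}(\mathcal{P})\ge k(|\mathcal{P}|-1)$.

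For sufficiency, I would set $p_1(\mathcal{P})=p_2(\mathcal{P})=k(|\mathcal{P}|-1)$ and first check that $p_1$ is supermodular on partitions of $V$; in fact it is modular, because Claim \ref{lkbzlkbl1}(c) gives $|\mathcal{P}_1|+|\mathcal{P}_2|=|\mathcal{P}_1\sqcap\mathcal{P}_2|+|\mathcal{P}_1\sqcup\mathcal{P}_2|$, so $p_1(\mathcal{P}_1)+p_1(\mathcal{P}_2)=p_1(\mathcal{P}_1\sqcap\mathcal{P}_2)+p_1(\mathcal{P}_1\sqcup\mathcal{P}_2)$. The hypothesis \eqref{tuttecondhyp} is then exactly the trimming condition \eqref{trimmingcond}, so Theorem \ref{trimminglemma} provides a graph $G=(V,E)$ obtained from $\mathcal{G}$ by trimming and satisfying $e_E(\mathcal{P})\ge k(|\mathcal{P}|-1)$ for every partition $\mathcal{P}$ of $V$. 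Theorem \ref{tuttethmrem} applied to $G$ then delivers a packing $T_1,\dots,T_k$ of $k$ edge-disjoint spanning trees in $G$.

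It remains to lift the packing back. By definition of trimming, the edges of $G$ correspond to the hyperedges of $\mathcal{G}$, each edge lying inside the hyperedge it came from. Letting $\mathcal{F}_i\subseteq\mathcal{E}$ be the hyperedges corresponding to the edges of $T_i$, each $\mathcal{F}_i$ trims back to $T_i$, so $(V,\mathcal{F}_i)$ is a spanning hypertree; edge-disjointness of the $T_i$ gives hyperedge-disjointness of the $\mathcal{F}_i$, completing the packing. The only potentially delicate step is the trimming itself, but Theorem \ref{trimminglemma} has already absorbed that work, so no serious obstacle remains; this theorem is essentially a direct corollary of Theorems \ref{trimminglemma} and \ref{tuttethmrem}.
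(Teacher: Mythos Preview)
Your proposal is correct and follows exactly the route the paper indicates: it states that Theorem \ref{tuttethmremhyp} ``can easily be proved by Theorems \ref{trimminglemma} and \ref{tuttethmrem},'' and your argument---taking $p_1=p_2=k(|\mathcal{P}|-1)$, checking supermodularity via Claim \ref{lkbzlkbl1}(c), trimming, applying Nash-Williams--Tutte, and lifting back---is precisely how that deduction goes.
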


If $\mathcal{G}$ is a graph, then Theorem \ref{tuttethmremhyp} reduces to Theorem \ref{tuttethmrem}.  In fact, Theorem \ref{tuttethmremhyp} can easily be proved  by Theorems \ref{trimminglemma} and \ref{tuttethmrem}. 
\medskip

We will now  exploit the fact that in Theorem \ref{trimminglemma} we can treat not only one but two supermodular functions on partitions.  We can hence generalize Theorem \ref{hvkgcvkl2} to hypergraphs.

\begin{thm}\label{hvkgcvkl2hyp}
Let $\mathcal{G}=(V,\mathcal{E})$ be a hypergraph, $S$ a multiset of vertices in $V$, $\alpha,\beta\in\mathbb Z_+,$ $f,g: V\rightarrow \mathbb Z_+$ functions, and ${\sf M}=(S,r_{\sf M})$ a matroid. There exists an {\sf M}-based $(f,g)$-bounded $(\alpha,\beta)$-limited packing of rooted hypertrees  in $\mathcal{G}$ if and only if   \eqref{newfgrm}, \eqref{jbcdjhcvduvcy}, and \eqref{oygvouiphnew} hold and
\begin{eqnarray} 
	e_\mathcal{E}({\cal P})	&	\ge	&	\sum_{X\in {\cal P}} \max\{r_{{\sf M}}(S)-r_{{\sf M}}(S_Y)-g(X\setminus Y):Y\subseteq X\} \hskip .2truecm\text{ for every partition } {\cal P} \text{ of }  V,\label{rfrrgrgrt2hyp}\\
	\beta+e_\mathcal{E}({\cal P})	&	\ge	&	\sum_{X\in {\cal P}} \max\{r_{{\sf M}}(S)-r_{{\sf M}}(S_Y)+f(Y):Y\subseteq X\} \hskip .8truecm\text{ for every partition } {\cal P} \text{ of }  V.\label{rfrrgrgrt2newhyp}\hskip .8truecm
\end{eqnarray}
\end{thm}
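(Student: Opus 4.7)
The plan is to derive this hypergraphic statement from its graphic counterpart, Theorem \ref{hvkgcvkl2}, by combining the trimming result Theorem \ref{trimminglemma} with the supermodularity provided by Claim \ref{icvhjbkn}. The key preliminary step is to recognise that, since $g$ and $f$ are modular on $V$, the right-hand sides of \eqref{rfrrgrgrt2hyp} and \eqref{rfrrgrgrt2newhyp} coincide with $p_1(\mathcal{P})$ and $\beta+p_2(\mathcal{P})$ respectively, where $p_1$ and $p_2$ are the supermodular functions on partitions defined in \eqref{defpg} and \eqref{defpf}. For \eqref{rfrrgrgrt2hyp} this just uses $g(X\setminus Y)=g(X)-g(Y)$ so that $r_{\sf M}(S)-r_{\sf M}(S_Y)-g(X\setminus Y)=(r_{\sf M}(S)+g(Y)-r_{\sf M}(S_Y))-g(X)$, and summing the maxima over $X\in\mathcal{P}$ produces the missing $-g(V)$; for \eqref{rfrrgrgrt2newhyp} the identification is immediate.

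For the sufficiency, assume \eqref{newfgrm}, \eqref{jbcdjhcvduvcy}, \eqref{oygvouiphnew}, \eqref{rfrrgrgrt2hyp} and \eqref{rfrrgrgrt2newhyp}. By the rewriting and Claim \ref{icvhjbkn}, $p_1$ and $p_2$ are supermodular on partitions of $V$ and the hypergraph $\mathcal{G}$ covers both, that is $e_\mathcal{E}(\mathcal{P})\ge\max\{p_1(\mathcal{P}),p_2(\mathcal{P})\}$ for every partition $\mathcal{P}$ of $V$. Theorem \ref{trimminglemma} then yields a trimming $G=(V,E)$ of $\mathcal{G}$ satisfying $e_E(\mathcal{P})\ge\max\{p_1(\mathcal{P}),p_2(\mathcal{P})\}$ for every $\mathcal{P}$, which re-expressed is exactly conditions \eqref{rfrrgrgrt} and \eqref{rfrrgrgrt2new} for $G$. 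The remaining conditions \eqref{newfgrm}, \eqref{jbcdjhcvduvcy}, \eqref{oygvouiphnew} depend only on $V,S,{\sf M},f,g,\alpha,\beta$, so they are automatically inherited, and Theorem \ref{hvkgcvkl2} produces an ${\sf M}$-based $(f,g)$-bounded $(\alpha,\beta)$-limited packing of rooted trees in $G$. Replacing each tree edge by the hyperedge of $\mathcal{G}$ from which it was trimmed lifts this, by the very definition of a rooted hyperforest, to the sought packing of rooted hypertrees in $\mathcal{G}$.

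For the necessity, start from an ${\sf M}$-based $(f,g)$-bounded $(\alpha,\beta)$-limited packing $\mathcal{B}$ of rooted hypertrees in $\mathcal{G}$. By definition $\mathcal{B}$ can be trimmed to a packing $\mathcal{B}'$ of rooted trees in some graph $G=(V,E)$, and this trimming preserves the root sets, so $\mathcal{B}'$ is automatically ${\sf M}$-based, $(f,g)$-bounded, and $(\alpha,\beta)$-limited. Theorem \ref{hvkgcvkl2} applied to $G$ then gives \eqref{newfgrm}, \eqref{jbcdjhcvduvcy}, \eqref{oygvouiphnew}, \eqref{rfrrgrgrt} and \eqref{rfrrgrgrt2new}. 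Finally, each edge of $G$ is the trimming of some hyperedge of $\mathcal{E}$, and a trimmed edge that crosses a partition $\mathcal{P}$ can only come from a hyperedge that also crosses $\mathcal{P}$, so $e_E(\mathcal{P})\le e_\mathcal{E}(\mathcal{P})$ for every partition, which upgrades \eqref{rfrrgrgrt} and \eqref{rfrrgrgrt2new} to \eqref{rfrrgrgrt2hyp} and \eqref{rfrrgrgrt2newhyp}.

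I expect no substantial obstacle here: the combinatorial heavy lifting has been absorbed into Theorem \ref{trimminglemma}, Claim \ref{icvhjbkn}, and Theorem \ref{hvkgcvkl2}, all proved previously. The only non-routine point is the rewriting identifying the conditions of the theorem with the supermodular functions from Claim \ref{icvhjbkn}; once this bookkeeping is done, the trimming reduction is immediate and the theorem follows at once from its graph version.
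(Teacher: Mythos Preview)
Your proposal is correct and follows essentially the same route as the paper: use Claim \ref{icvhjbkn} to recognise \eqref{rfrrgrgrt2hyp} and \eqref{rfrrgrgrt2newhyp} as the covering conditions $e_\mathcal{E}(\mathcal{P})\ge p_i(\mathcal{P})$, apply Theorem \ref{trimminglemma} to pass to a graph, invoke Theorem \ref{hvkgcvkl2}, and lift back; the necessity is handled by trimming and the inequality $e_E(\mathcal{P})\le e_\mathcal{E}(\mathcal{P})$. Your write-up is in fact slightly more explicit than the paper's about the modular rewriting $g(X\setminus Y)=g(X)-g(Y)$ that matches the right-hand side of \eqref{rfrrgrgrt2hyp} with $p_1$, which is a welcome clarification.
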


\begin{proof}
To prove the {\bf necessity}, suppose that  there exists an {\sf M}-based $(f,g)$-bounded $(\alpha,\beta)$-limited packing of rooted hypertrees  in $\mathcal{G}$. Then, by definition,  the hypertrees in the packing can be trimmed to get an {\sf M}-based $(f,g)$-bounded $(\alpha,\beta)$-limited packing of rooted trees. Then, by Theorem \ref{hvkgcvkl2}, we get that  \eqref{newfgrm}, \eqref{rfrrgrgrt}, \eqref{jbcdjhcvduvcy}, \eqref{oygvouiphnew}, and \eqref{rfrrgrgrt2new} hold. Then  \eqref{rfrrgrgrt2hyp} and \eqref{rfrrgrgrt2newhyp} hold in $\mathcal{G}$.

To prove the {\bf sufficiency}, suppose that  \eqref{newfgrm}, \eqref{jbcdjhcvduvcy}, \eqref{oygvouiphnew}, \eqref{rfrrgrgrt2hyp},    and \eqref{rfrrgrgrt2newhyp} hold. 
Note that \eqref{rfrrgrgrt2hyp}  and \eqref{rfrrgrgrt2newhyp} are equivalent to $e_\mathcal{E}({\cal P})\ge p_1(\mathcal{P})$ and $e_\mathcal{E}({\cal P})\ge p_2(\mathcal{P})$ for every partition $\mathcal{P}$ of $V,$ where the functions $p_1$ and $p_2$ are defined in \eqref{defpg} and \eqref{defpf}. By Claim \ref{icvhjbkn}, $p_1$ and $p_2$ are supermodular on partitions of $V.$
Thus, by Theorem \ref{trimminglemma}, we get a graph $G$ that satisfies \eqref{rfrrgrgrt} and \eqref{rfrrgrgrt2new}. Since  \eqref{newfgrm}, \eqref{jbcdjhcvduvcy}, and \eqref{oygvouiphnew} hold by assumption, Theorem \ref{hvkgcvkl2} implies that there exists  an {\sf M}-based $(f,g)$-bounded $(\alpha,\beta)$-limited packing of rooted trees  in $G$. By replacing each edge of $G$ by the hyperedge that was trimmed to it, we obtain   an {\sf M}-based $(f,g)$-bounded $(\alpha,\beta)$-limited packing of rooted hypertrees  in $\mathcal{G}$.
\end{proof}

If $\mathcal{G}$ is a graph, then Theorem \ref{hvkgcvkl2hyp} reduces to Theorem \ref{hvkgcvkl2}. If $S$ is a multiset of vertices in $V$ of size $k$, ${\sf M}$ is the free matroid on $S$, $\alpha=\beta=k,$ $f(v)=0$ and $g(v)=\infty$ for every $v\in V$, then Theorem \ref{hvkgcvkl2hyp} reduces to Theorem \ref{tuttethmremhyp}.

\subsection{Augmentation for matroid-based $(f,g)$-bounded $(\alpha,\beta)$-limited packing of  rooted hypertrees}

Frank \cite{edgeconn} solved the augmentation version of Theorem \ref{tuttethmrem} in which a minimum number of edges must be added to a graph to have a packing of $k$ spanning trees.

\begin{thm}[Frank \cite{edgeconn}]\label{augspan}
Let ${G}=(V,{E})$ be a graph and $k,\gamma \in\mathbb Z_+,$ We can add $\gamma$ edges to $G$ to have a packing of $k$ spanning trees if and only if
\begin{eqnarray} 
\gamma+e_{E}({\cal P})&\ge&k(|{\cal P}|-1).
\end{eqnarray}
\end{thm}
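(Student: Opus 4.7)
The plan is to interpret the augmentation problem as one of covering a supermodular function on partitions, and then close with the Nash-Williams-Tutte theorem (Theorem \ref{tuttethmrem}). Necessity is immediate: if $F$ is an added edge set of size $\gamma$ such that $G+F$ admits a packing of $k$ spanning trees, then Theorem \ref{tuttethmrem} applied to $G+F$, combined with $e_{E\cup F}(\mathcal{P})\le e_E(\mathcal{P})+\gamma$, yields $\gamma+e_E(\mathcal{P})\ge k(|\mathcal{P}|-1)$ for every partition $\mathcal{P}$ of $V$.

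For sufficiency, I would introduce the function $p$ on partitions of $V$ defined by $p(\mathcal{P})=k(|\mathcal{P}|-1)-e_E(\mathcal{P})$. The first step is to verify that $p$ is supermodular on partitions: the term $k(|\mathcal{P}|-1)$ is modular on partitions by Claim \ref{lkbzlkbl1}(c), and by Lemma \ref{submodeap} applied with $\mathcal{E}_1=\mathcal{E}_2=E$, the function $e_E$ is submodular on partitions, so $-e_E$ is supermodular and hence so is $p$. Next, I would apply Theorem \ref{coveringpartsuper12} with $p_1=p_2=p$ and the given $\gamma$: the condition $p(\{V\})\le 0$ holds trivially since $p(\{V\})=0$, and the condition $\gamma\ge p(\mathcal{P})$ for every partition is exactly the stated hypothesis. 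This produces an edge set $F$ on $V$ of size $\gamma$ such that $e_F(\mathcal{P})\ge p(\mathcal{P})$ for every partition $\mathcal{P}$, which rearranges to $e_{E\cup F}(\mathcal{P})\ge k(|\mathcal{P}|-1)$. Theorem \ref{tuttethmrem} applied to $G+F$ then delivers the desired packing of $k$ spanning trees.

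I do not foresee any serious obstacle: the proof is essentially a one-step application of the machinery already assembled in the paper, and the only real piece of verification is the supermodularity of $p$, which follows by elementary bookkeeping from Claim \ref{lkbzlkbl1}(c) and Lemma \ref{submodeap}.
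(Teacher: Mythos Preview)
Your proposal is correct and follows exactly the route the paper sketches: define $p_1(\mathcal{P})=p_2(\mathcal{P})=k(|\mathcal{P}|-1)-e_E(\mathcal{P})$, verify its supermodularity via Claim \ref{lkbzlkbl1}(c) and Lemma \ref{submodeap}, apply Theorem \ref{coveringpartsuper12}, and finish with Theorem \ref{tuttethmrem}. Your write-up simply fills in the details (necessity, checking \eqref{partcovcond1} and \eqref{partcovcond2}, the final appeal to Nash-Williams--Tutte) that the paper leaves implicit.
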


If $\gamma=0,$ then Theorem \ref{augspan} reduces to Theorem \ref{tuttethmrem}. Theorem \ref{augspan} can be easily proved by Theorem \ref{coveringpartsuper12} applied for $p_1(\mathcal{P})=p_2(\mathcal{P})=k(|{\cal P}|-1)-e_{E}({\cal P}).$ Note that, by Claim \ref{lkbzlkbl1}(c) and Lemma \ref{submodeap}, $p_1=p_2$ is a supermodular function on partitions. 
\medskip

We will now  exploit the fact that in Theorem \ref{coveringpartsuper12} we can treat two different supermodular functions on partitions.  We can hence  propose a common generalization of Theorems \ref{hvkgcvkl2hyp} and \ref{augspan}.

\begin{thm}\label{kjvjhcychjkjhyp}
Let $\mathcal{G}=(V,\mathcal{E})$ be a hypergraph, $S$ a multiset of vertices in $V$, $\alpha,\beta,\gamma\in\mathbb Z_+,$ $f,g: V\rightarrow \mathbb Z_+$ functions, and ${\sf M}=(S,r_{\sf M})$ a matroid. We can add $\gamma$ edges to $\mathcal{G}$ to have an {\sf M}-based $(f,g)$-bounded $(\alpha,\beta)$-limited packing of rooted hypertrees if and only if \eqref{newfgrm}, \eqref{jbcdjhcvduvcy}, and \eqref{oygvouiphnew}   hold and
\begin{eqnarray} 
r_{{\sf M}}(S)-r_{{\sf M}}(S_Y) &\le&\min\{\beta-f(Y),g(\overline Y)\}\hskip 4.25truecm\text{ for every } Y\subseteq V,\label{rmgrms}\\
	g(V)+\gamma+e_\mathcal{E}({\cal P})	&	\ge	&	\sum_{X\in {\cal P}} \max\{r_{{\sf M}}(S)-r_{{\sf M}}(S_Y)+g(Y):Y\subseteq X\} \text{ for every partition } {\cal P} \text{ of }  V,\label{rfrrgrgrtaughyp}\\
	\beta+\gamma+e_\mathcal{E}({\cal P})	&	\ge	&	\sum_{X\in {\cal P}} \max\{r_{{\sf M}}(S)-r_{{\sf M}}(S_Y)+f(Y):Y\subseteq X\} \text{ for every partition } {\cal P} \text{ of }  V.\hskip .6truecm\label{rfrrgrgrt2newaughyp}
\end{eqnarray}
\end{thm}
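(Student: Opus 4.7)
The plan is to reduce Theorem \ref{kjvjhcychjkjhyp} to the existence result Theorem \ref{hvkgcvkl2hyp} by constructing an appropriate augmenting edge set via Theorem \ref{coveringpartsuper12}. The key observation is that hypotheses \eqref{rfrrgrgrtaughyp} and \eqref{rfrrgrgrt2newaughyp} are exactly the covering inequalities $\gamma\ge p_1'(\mathcal{P})$ and $\gamma\ge p_2'(\mathcal{P})$ for $p_i'(\mathcal{P}):=p_i(\mathcal{P})-e_\mathcal{E}(\mathcal{P})$, where $p_1,p_2$ are the supermodular functions on partitions from Claim \ref{icvhjbkn}; and hypothesis \eqref{rmgrms} is exactly the trivial-partition condition $\max\{p_1'(\{V\}),p_2'(\{V\})\}\le 0$. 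The identification rests on the complementation identity $g(Y)=g(X)-g(X\setminus Y)$ for $Y\subseteq X$, which converts the ``$-g(X\setminus Y)$'' shape inside $p_1$ into the ``$+g(Y)$'' shape appearing in \eqref{rfrrgrgrtaughyp} (and symmetrically for $p_2$ and the trivial-partition case).

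For necessity, I assume a set $F'$ of $\gamma$ edges has been added so that $\mathcal{G}':=(V,\mathcal{E}\cup F')$ admits the required packing. Applying Theorem \ref{hvkgcvkl2hyp} to $\mathcal{G}'$ immediately delivers \eqref{newfgrm}, \eqref{jbcdjhcvduvcy}, \eqref{oygvouiphnew}; and since $e_{\mathcal{E}\cup F'}(\mathcal{P})\le e_\mathcal{E}(\mathcal{P})+\gamma$, the same complementation identity derives \eqref{rfrrgrgrtaughyp} from \eqref{rfrrgrgrt2hyp} and \eqref{rfrrgrgrt2newaughyp} from \eqref{rfrrgrgrt2newhyp}. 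For \eqref{rmgrms}, I fix $Y\subseteq V$; if $Y\neq\emptyset$, for any $v\in Y$ the roots of trees in the packing containing $v$ form a basis of $\sf M$ of size $r_{\sf M}(S)$, of which at most $r_{\sf M}(S_Y)$ can come from $S_Y$, so at least $r_{\sf M}(S)-r_{\sf M}(S_Y)$ distinct trees must be rooted in $\overline Y$; this yields both $g(\overline Y)\ge r_{\sf M}(S)-r_{\sf M}(S_Y)$ and $\beta\ge f(Y)+(r_{\sf M}(S)-r_{\sf M}(S_Y))$. The case $Y=\emptyset$ follows by counting the total number of roots in any fixed basis of $\sf M$.

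For sufficiency, Claim \ref{icvhjbkn} gives supermodularity of $p_1$ and $p_2$ on partitions, and Lemma \ref{submodeap} (with $\mathcal{E}_1=\mathcal{E}_2=\mathcal{E}$) gives that $-e_\mathcal{E}$ is supermodular on partitions; hence each $p_i'$ is supermodular on partitions, and the hypotheses of Theorem \ref{coveringpartsuper12} hold by the paragraph above. It therefore yields an edge set $F'$ on $V$ of size $\gamma$ with $e_{F'}(\mathcal{P})\ge\max\{p_1'(\mathcal{P}),p_2'(\mathcal{P})\}$ for every partition $\mathcal{P}$; setting $\mathcal{G}':=(V,\mathcal{E}\cup F')$ and reading the rewritings in the opposite direction shows that $\mathcal{G}'$ satisfies \eqref{rfrrgrgrt2hyp} and \eqref{rfrrgrgrt2newhyp}, so Theorem \ref{hvkgcvkl2hyp} delivers the desired packing. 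The main obstacle is really the bookkeeping: recognising that five separate ``$\max$ over $Y\subseteq X$'' expressions appearing in \eqref{rmgrms}--\eqref{rfrrgrgrt2newhyp} are shifted forms of the same two supermodular functions from Claim \ref{icvhjbkn}. Once this alignment is spotted, Theorems \ref{coveringpartsuper12} and \ref{hvkgcvkl2hyp} do all the real work.
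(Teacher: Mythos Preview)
Your proposal is correct and follows essentially the same route as the paper: define $p_i'(\mathcal{P})=p_i(\mathcal{P})-e_\mathcal{E}(\mathcal{P})$ with $p_1,p_2$ from Claim \ref{icvhjbkn}, invoke Claim \ref{icvhjbkn} and Lemma \ref{submodeap} for supermodularity, apply Theorem \ref{coveringpartsuper12} to obtain the augmenting edge set, and finish with Theorem \ref{hvkgcvkl2hyp}. The only cosmetic difference is that for the necessity of \eqref{rmgrms} you argue combinatorially from the packing, whereas the paper simply specializes \eqref{rfrrgrgrt2hyp} and \eqref{rfrrgrgrt2newhyp} for $\mathcal{E}'$ to the trivial partition $\mathcal{P}=\{V\}$; both are fine. (One small slip of phrasing: $p_1$ in Claim \ref{icvhjbkn} is already written with $+g(Y)$, so the complementation identity is actually used in the \emph{other} direction, to pass from $e_{\mathcal{E}\cup F'}(\mathcal{P})\ge p_1(\mathcal{P})$ back to the ``$-g(X\setminus Y)$'' form of \eqref{rfrrgrgrt2hyp}.)
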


\begin{proof}
To prove the {\bf necessity}, suppose that  we can add an edge set $F$ of size $\gamma$ to $\mathcal{G}$ to have an {\sf M}-based $(f,g)$-bounded $(\alpha,\beta)$-limited packing of rooted hypertrees in $\mathcal{G}+F=(V,\mathcal{E}')$. Then, by Theorem \ref{hvkgcvkl2hyp}, we get that \eqref{newfgrm}, \eqref{jbcdjhcvduvcy},  \eqref{oygvouiphnew}, \eqref{rfrrgrgrt2hyp}, and \eqref{rfrrgrgrt2newhyp} hold for $\mathcal{E}'$. Applying \eqref{rfrrgrgrt2hyp} and \eqref{rfrrgrgrt2newhyp} for ${\cal P}=\{V\},$ we get \eqref{rmgrms}. Since $e_{\mathcal{E}'}({\cal P})\le e_\mathcal{E}({\cal P})+\gamma,$ \eqref{rfrrgrgrt2hyp} and \eqref{rfrrgrgrt2newhyp} imply \eqref{rfrrgrgrtaughyp} and \eqref{rfrrgrgrt2newaughyp}.

To prove the {\bf sufficiency}, suppose that  \eqref{newfgrm}, \eqref{jbcdjhcvduvcy}, \eqref{oygvouiphnew}, \eqref{rmgrms},   \eqref{rfrrgrgrtaughyp},  and \eqref{rfrrgrgrt2newaughyp} hold. Let the functions {\boldmath$p'_1$} and {\boldmath$p'_2$} be defined as follows. For every partition $\mathcal{P}$ of $V,$ $p'_1({\cal P})=p_1({\cal P})-e_\mathcal{E}({\cal P})$ and $p'_2({\cal P})=p_2({\cal P})-e_\mathcal{E}({\cal P})$, where $p_1$ and $p_2$ are defined in \eqref{defpg} and \eqref{defpf}. By Claim \ref{icvhjbkn} and Lemma \ref{submodeap}, $p'_1$ and $p'_2$ are supermodular on partitions of $V.$ By \eqref{rmgrms}, we get that \eqref{partcovcond1} holds for $p'_1$ and $p'_2$. By \eqref{rfrrgrgrtaughyp} and \eqref{rfrrgrgrt2newaughyp}, we get that \eqref{partcovcond2} holds for $p'_1$ and $p'_2$. Hence  Theorem \ref{coveringpartsuper12} implies that there exists an edge set $F$ on $V$ of size $\gamma$ such that $e_F(\mathcal{P})\ge\max\{p'_1(\mathcal{P}), p'_2(\mathcal{P})\}$ for every partition $\mathcal{P}$ of $V.$ This means that in the hypergraph $\mathcal{G}'=(V,\mathcal{E}'=\mathcal{E}\cup F)$, \eqref{rfrrgrgrt2hyp} and \eqref{rfrrgrgrt2newhyp} hold for $\mathcal{E}'$. Since  \eqref{newfgrm}, \eqref{jbcdjhcvduvcy}, and \eqref{oygvouiphnew} also hold, by Theorem \ref{hvkgcvkl2hyp}, there exists an {\sf M}-based $(f,g)$-bounded $(\alpha,\beta)$-limited packing of rooted hypertrees  in $\mathcal{G}',$ which completes the proof of Theorem \ref{kjvjhcychjkjhyp}.
\end{proof}

If $\gamma=0,$ then Theorem \ref{kjvjhcychjkjhyp} reduces to Theorem \ref{hvkgcvkl2hyp}. If $\mathcal{G}$ is a graph, $S$ is a multiset of vertices in $V$ of size $k$ and ${\sf M}$ is the free matroid on $S$,  $f(v)=0$ and $g(v)=\infty$ for every $v\in V$ and $\alpha=\beta=k,$ then Theorem \ref{kjvjhcychjkjhyp} reduces to \ref{augspan}.
 
\subsection{Augmentation for matroid-based $(f,g)$-bounded $(\alpha,\beta)$-limited packing of  rooted  hyperforests}

The argument  in \cite{HSz4} showing that Theorem 4  in \cite{HSz4} implies Theorem 5  in \cite{HSz4} can be applied here as well. Hence  Theorem \ref{kjvjhcychjkjhyp} implies the following result.

\begin{thm}\label{kjvjhcychjkjforhyp}
Let $\mathcal{G}=(V,\mathcal{E})$ be a hypergraph, ${\cal S}$ a family of subsets of $V$,   $\alpha,\beta,\gamma\in\mathbb Z_+,$ $f,g: V\rightarrow \mathbb Z_+$ functions, and ${\sf M}=({\cal S},r_{\sf M})$ a matroid. We can add $\gamma$ edges to $\mathcal{G}$ to have an {\sf M}-based $(f,g)$-bounded $(\alpha,\beta)$-limited packing of rooted hyperforests if and only if \eqref{jbcdjhcvduvcy} and \eqref{rmgrms} hold and
\begin{eqnarray} 
	f(v)	&	\le	&\hskip .63truecm  \min\{r_{\sf M}({\cal S}_v),g(v)\}\hskip 3.4truecm	\text{ for every } v\in V,\label{newfgrmbr}\\
\alpha	&	\le	&	\sum_{v\in V}\min\{r_{\sf M}({\cal S}_v),g(v)\},		\label{oygvouiphnewfor}\\
	g(V)+\gamma+e_\mathcal{E}({\cal P})		&	\ge	&	\sum_{X\in {\cal P}} \max\{r_{{\sf M}}({\cal S})-r_{{\sf M}}({\cal S}_Y)+g(Y):Y\subseteq X\} \hskip .14truecm\text{ for every partition } {\cal P} \text{ of }  V,\label{rfrrgrgrtaugforhyp}\\
	\beta+\gamma+e_\mathcal{E}({\cal P})	&	\ge	&	\sum_{X\in {\cal P}} \max\{r_{{\sf M}}({\cal S})-r_{{\sf M}}({\cal S}_Y)+f(Y):Y\subseteq X\} \hskip 0.1truecm\text{ for every partition } {\cal P} \text{ of }  V.\hskip .7truecm\label{rfrrgrgrt2newaugforhyp}
\end{eqnarray}
\end{thm}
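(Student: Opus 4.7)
The plan is to deduce Theorem \ref{kjvjhcychjkjforhyp} from Theorem \ref{kjvjhcychjkjhyp} via a matroid-theoretic reduction that converts the family-indexed setting into the vertex-multiset setting. As the first step, I would introduce the multiset
\[
  S^*\;:=\;\{(v,S):S\in{\cal S},\ v\in S\}
\]
of vertices of $V$, where each pair $(v,S)$ is a copy of the vertex $v$, together with the natural projection $\phi\colon S^*\to{\cal S}$, $(v,S)\mapsto S$. I would then define a matroid ${\sf M}^*$ on $S^*$ as the parallel extension of ${\sf M}$: a subset $X\subseteq S^*$ is independent in ${\sf M}^*$ iff $\phi|_X$ is injective and $\phi(X)\in \mathcal{I}_{\sf M}$. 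A short lift of the exchange axiom through $\phi$ shows that ${\sf M}^*$ is indeed a matroid, and the same argument yields the crucial rank identity
\[
   r_{{\sf M}^*}(S^*_Y)\;=\;r_{\sf M}({\cal S}_Y)\qquad\text{for every }Y\subseteq V.
\]

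The second step is to establish a bijection between ${\sf M}$-based $(f,g)$-bounded $(\alpha,\beta)$-limited packings of rooted hyperforests in $\mathcal{G}$ and ${\sf M}^*$-based $(f,g)$-bounded $(\alpha,\beta)$-limited packings of rooted hypertrees in $\mathcal{G}$. Given a hyperforest packing, I would decompose each rooted hyperforest $(S',\mathcal{F}_S)$ into its connected components, attaching the component rooted at $v\in S'$ to the element $(v,S)\in S^*$; conversely, starting from a hypertree packing I would group the trees by their $S$-coordinate to reassemble a hyperforest for each $S\in{\cal S}$. Edge-disjointness is preserved by construction, and the number of (hyper)forests (respectively (hyper)trees) having $v$ as a root, together with the total number of roots, translate verbatim, so the $(f,g)$- and $(\alpha,\beta)$-conditions agree on the two sides. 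The basis condition at a vertex $v$ transports as well: a hypertree attached to $(u,S)$ contains $v$ iff $v$ lies in $\mathcal{F}_S$, and the required injectivity of $\phi$ on the basis at $v$ is automatic because $v$ belongs to at most one component of $\mathcal{F}_S$.

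The last step is to invoke Theorem \ref{kjvjhcychjkjhyp}. Thanks to the rank identities $r_{{\sf M}^*}(S^*)=r_{\sf M}({\cal S})$ and $r_{{\sf M}^*}(S^*_Y)=r_{\sf M}({\cal S}_Y)$, its hypotheses applied to $(\mathcal{G},S^*,\alpha,\beta,\gamma,f,g,{\sf M}^*)$ reduce after substitution to exactly conditions \eqref{newfgrmbr}, \eqref{jbcdjhcvduvcy}, \eqref{oygvouiphnewfor}, \eqref{rmgrms} with $S,S_Y$ read as ${\cal S},{\cal S}_Y$, \eqref{rfrrgrgrtaugforhyp}, and \eqref{rfrrgrgrt2newaugforhyp}. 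The edge set of size $\gamma$ produced by Theorem \ref{kjvjhcychjkjhyp} then yields an ${\sf M}^*$-based $(f,g)$-bounded $(\alpha,\beta)$-limited packing of rooted hypertrees, which the bijection transports to the desired packing of rooted hyperforests.

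The main obstacle will be the rank-function computation $r_{{\sf M}^*}(S^*_Y)=r_{\sf M}({\cal S}_Y)$, on which the coincidence of the conditions of the two theorems entirely rests; once this identity is secured, the rest is careful bookkeeping with the bijection between hyperforests and hypertrees.
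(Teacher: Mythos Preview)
Your proposal is correct and follows essentially the same approach as the paper, which defers to the reduction argument in \cite{HSz4} (Theorem~4 $\Rightarrow$ Theorem~5) converting the family-indexed forest setting into the vertex-multiset tree setting; your parallel-extension matroid ${\sf M}^*$ and the component-wise bijection are precisely that reduction spelled out.
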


If ${\cal S}=\{S_v\}_{v\in V}$, then Theorem \ref{kjvjhcychjkjforhyp} reduces to Theorem \ref{kjvjhcychjkjhyp}. 
Note that Theorem \ref{kjvjhcychjkjforhyp} implies all the results of this section.
\medskip

The case  $\alpha=0$ and $\beta=\infty$ of Theorem \ref{kjvjhcychjkjforhyp} coincides with a special case of a result of \cite{szighypbranc}.

\end{document}